\newcommand{\abs}[1]{\left|#1\right|}
\newcommand{\ur}{\textnormal{ur}}
\newcommand{\der}{\textnormal{der}}
\newcommand{\Tr}{\textnormal{Tr}}
\newcommand{\vol}{\textnormal{vol}}
\DeclareMathOperator{\modp}{\mathbf{mod}}
\DeclareMathOperator{\Rep}{Rep}
\DeclareMathOperator{\Ad}{Ad}
\DeclareMathOperator{\Frob}{Frob}
\DeclareMathOperator{\Gal}{Gal}
\DeclareMathOperator{\GL}{GL}
\DeclareMathOperator{\SL}{SL}
\DeclareMathOperator{\Irr}{Irr}
\DeclareMathOperator{\Ind}{Ind}
\DeclareMathOperator{\ind}{Ind}
\DeclareMathOperator{\fdeg}{fdeg}
\DeclareMathOperator{\Lie}{Lie}
\DeclareMathOperator{\triv}{triv}
\newtheorem{theorem}{Theorem}[section]
\newtheorem{conjecture}{Conjecture}[section]
\newtheorem{condition}{Condition}[section]
\newtheorem{lemma}[theorem]{Lemma}
\theoremstyle{definition}
\newtheorem{definition}[theorem]{Definition}
\theoremstyle{remark}
\newtheorem{remark}[theorem]{Remark}
\title{The Hiraga-Ichino-Ikeda conjecture for principal series of split p-adic groups}
\begin{document}

\author[G. Ricci]{Giulio Ricci}
\address{Radboud Universiteit Nijmegen, Heyendaalseweg 135, 6525AJ Nijmegen, the Netherlands}
\email{giulio.ricci@ru.nl}

\date{\today}

\begin{abstract}   
  Given a $p$-adic connected split reductive group $\mathcal{G},$ we use the local Langlands correspondence as defined in \cite{reeder2002isogenies} and \cite{PrinicipalDisconencted} to prove the HII conjecture for irreducible discrete series representations contained in a principal series of $\mathcal{G}$. We verify the predicted formula relating the formal degree of such representations to the adjoint $\gamma$-factor of their associated Langlands parameter. First, we prove it under the assumption that the center of $\mathcal{G}$ is connected, and then we generalize the result.  
\end{abstract}

\maketitle

\tableofcontents

\section{Introduction}

Let $\mathcal{G}$ be a connected reductive split group over a non-archimedean local field $K$ and write $G:=\mathcal{G}(K)$. Let $\mathcal{T}$ be a maximal split torus of $\mathcal{G}$ and write $T:=T(K).$ 

The aim of this paper is to prove the Hiraga-Ichino-Ikeda (HII) conjecture for discrete series representations occurring in the principal series of $G$, that is, those appearing as irreducible constituents of parabolically induced representations from characters of $T$.

In 1976, Borel showed that the category of representations of $G$ generated by their Iwahori-fixed vectors is equivalent to the category of modules over the Iwahori–Hecke algebra of $G$ \cite{borel1976admissible}. In 1987, Kazhdan and Lusztig classified the constituents of unramified principal series representations of $G$ and 
classified the simple modules over this Iwahori-Hecke algebra under the assumption that the center of $\mathcal{G}$ is connected \cite{ProofDeligneLanglandsConjecture}. Later on, in 2002, Reeder removed this connectedness assumption \cite{reeder2002isogenies}.

In 1988, Roche generalized these results, proving (under mild restrictions on the characteristic of $K$) that every Bernstein component associated to a principal series of $G$ can be described in terms of modules over a suitable Iwahori-Hecke algebra of a (possibly disconnected) group \cite{RochePrincipal}. Building on Roche's results, Reeder established a local Langlands correspondence for principal series representations, under the assumption that the center of $\mathcal{G}$ is connected \cite{reeder2002isogenies}. More recently, in 2017, Aubert, Baum, Plymen, and Solleveld completed (up to the same restrictions as Roche) the classification of all irreducible complex representations in the principal series of $G$, removing the assumption of connectedness of the center. This led to a local Langlands correspondence for principal series representations of $G$ \cite{PrinicipalDisconencted}.

The HII conjecture, formulated in 2007 by Hiraga, Ichino, and Ikeda \cite[Conjecture 1.4]{HII}, predicts a precise relationship between the formal degree of a discrete series representation and the adjoint $\gamma$-factor associated with its $L$-parameter, whenever a local Langlands correspondence is available. 

Let $G^{\vee}$ be the complex dual group of $G$, and write $Z(G)^s$ for the maximal $K$-split central torus in $G.$ If $\varphi$ is any $L$-parameter of $G^{\vee},$ we write $$S_{\varphi}^{\sharp}:=\pi_0(Z_{(G/Z(G)^s)^{\vee}}(\varphi))$$ for the component group of the centralizer of $\varphi$ in $(G/Z(G)^s)^{\vee}.$ Moreover, if $\psi$ is any order-0 additive character of $K,$ we can define the adjoint $\gamma$-factor $\gamma(0,\Ad_{G^{\vee}}\circ\varphi,\psi).$ For the exact definition of the $\gamma$-factor, we will refer to \cite{ArithmeticInv}.

 \begin{conjecture}\cite[1.4]{HII}\label{HII}
Assume we have a local Langlands correspondence for $G$. Let $\pi$ be an irreducible discrete series representation of $G$, and let $(\varphi_{\pi},\rho_{\pi})$ be the enhanced $L$-parameter associated to it. Let $\psi$ be an order-0 additive character of $K$. Then we have an equality $$\fdeg(\pi)=\frac{\dim(\rho_{\pi})}{\abs{S_{\varphi_\pi}^{\sharp}}}\abs{\gamma(0,\Ad_{G^{\vee}}\circ\varphi_{\pi},\psi)}.$$
 \end{conjecture}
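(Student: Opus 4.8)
The plan is to reduce the formula, through the theory of types and the Bernstein decomposition, to a statement about affine Hecke algebras --- where formal degrees of discrete series modules are residues of Macdonald's $c$-function --- then to compute the adjoint $\gamma$-factor of $\varphi_\pi$ explicitly, and finally to match the two expressions term by term. As the abstract indicates, I would first carry this out assuming $Z(\mathcal{G})$ connected, and only afterwards remove that hypothesis.

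First I would fix the inertial class $\mathfrak{s}=[\mathcal{T},\chi]_{\mathcal{G}}$ of the component of $\Rep(G)$ containing $\pi$. By Roche's work \cite{RochePrincipal}, extended to the disconnected-center case in \cite{PrinicipalDisconencted}, there is a type for $\mathfrak{s}$ and a Morita equivalence between $\Rep(G)^{\mathfrak{s}}$ and the module category of an affine Hecke algebra $\mathcal{H}(\mathfrak{s})\cong\mathcal{H}(R_\chi,q_\chi)\rtimes\Gamma_\chi$, where $R_\chi\subseteq R$ is the subsystem of roots along which $\chi$ is unramified, the parameters $q_\chi$ are explicit powers of $q$, and the finite group $\Gamma_\chi$ is trivial when $Z(\mathcal{G})$ is connected. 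Under this equivalence $\pi$ corresponds to a discrete series module $V_\pi$, and Opdam's comparison of the Plancherel measure of $G$ with the canonical trace of $\mathcal{H}(\mathfrak{s})$ gives $\fdeg(\pi)=c_{\mathfrak{s}}\cdot\fdeg(V_\pi)$ for an explicit constant $c_{\mathfrak{s}}$ built from the $\vol$ of the type and a power of $q$; pinning down $c_{\mathfrak{s}}$ exactly, not merely up to a scalar, is the first bookkeeping task.

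The central character of $V_\pi$ is, by the Kazhdan--Lusztig--Reeder classification, a residual point $r$ of the complex torus attached to $R_\chi$, and by the known residue formulas (Opdam, Ciubotaru--Opdam) $\fdeg(V_\pi)$ equals, up to sign, the residue at $t=r$ of $\mu(t)\,dt$ --- an explicit product over $R_\chi$ of factors of the shape $\dfrac{(1-q_\alpha\alpha(t))(1-q_\alpha^{-1}\alpha(t))}{(1-\alpha(t))^2}$ (with the usual modifications for long and short roots and for the affine roots), regularised along a coordinate subspace through $r$; I would record this rational function of $q$ in closed form. On the Galois side, under the correspondence of \cite{reeder2002isogenies} and \cite{PrinicipalDisconencted}, $\varphi_\pi$ is the parameter of $\chi$ on $W_K$ (which has image in $T^\vee$) together with a distinguished nilpotent, so $\Ad_{G^\vee}\circ\varphi_\pi$ decomposes as a Weil--Deligne representation into a toral part and rank-one pieces indexed by $R^\vee$ carrying the $\Frob$- and $\SL_2$-actions; hence $\gamma(0,\Ad_{G^\vee}\circ\varphi_\pi,\psi)=\epsilon(0,\cdot,\psi)\cdot\prod\frac{L(1,\cdot^\vee)}{L(0,\cdot)}$ is a product of Tate $\gamma$-factors of tame characters. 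Since $\psi$ has order $0$, the $\epsilon$-part is a monomial in $q$ governed by the conductor, while the $L$-factors reproduce exactly the numerator and denominator pattern of the residue above. Finally, the enhancement $\rho_\pi$ and the group $S^{\sharp}_{\varphi_\pi}=\pi_0(Z_{(\mathcal{G}/Z(\mathcal{G})^s)^\vee}(\varphi_\pi))$ are read off from the same classification, with $\abs{S^{\sharp}_{\varphi_\pi}}$ recording the passage from $R$ to $R_\chi$ and from $\mathcal{G}$ to its quotient by the split central torus.

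The heart of the argument is then to check the identity $c_{\mathfrak{s}}\cdot\fdeg(V_\pi)=\dim(\rho_\pi)\,\abs{S^{\sharp}_{\varphi_\pi}}^{-1}\,\abs{\gamma(0,\Ad_{G^\vee}\circ\varphi_\pi,\psi)}$ by matching corresponding terms in the two product expansions, first for $Z(\mathcal{G})$ connected --- where $\Gamma_\chi=1$ and $G^\vee$ is connected, so the residue and the $\gamma$-factor line up directly. I would then remove the connectedness assumption either by comparing $\mathcal{G}$ with a $z$-extension of it (on the dual side: quotienting by $Z(\mathcal{G})^s$) or by analysing directly how the finite group $\Gamma_\chi$ acts on both sides --- it contributes a factor to $\fdeg$ through the induced/restricted module $V_\pi$, and a matching factor to the right-hand side through $\abs{S^{\sharp}_{\varphi_\pi}}$ and $\dim(\rho_\pi)$. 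The main obstacle I anticipate is precisely this final matching with unequal parameters and a nontrivial $\Gamma_\chi$: one must make the residue computation for $\mathcal{H}(R_\chi,q_\chi)\rtimes\Gamma_\chi$ and the $\epsilon$-factor bookkeeping compatible with the exact normalisations of \cite{HII} and \cite{ArithmeticInv} --- in particular the precise power of $q$ inside $c_{\mathfrak{s}}$ and the chosen Haar measure --- so that the equality holds on the nose rather than up to a harmless scalar.
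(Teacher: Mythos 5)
Your first step (Roche's $\mathfrak{s}$-type and the equivalence $\Rep(G)^{\mathfrak{s}}\simeq \mathcal{H}(R_\chi,q_\chi)\rtimes\Gamma_\chi\text{-}\modp$, with the volume factor from the type) coincides with the paper's, but from there the two arguments diverge at the decisive point. The paper performs no residue calculus at all: it uses Roche's identification of $\mathcal{H}_t$ with the Iwahori--Hecke algebra of an actual split (possibly disconnected) group $H=H^{\circ}\rtimes C_{\chi}$ whose dual is $Z_{G^{\vee}}(\varphi_{\chi}(I_K))$, transfers $\pi$ to an Iwahori-spherical, i.e.\ unipotent, representation $\pi'$ of $H$ with $\fdeg(\pi)=\frac{\vol(\mathcal{I}_H)}{\vol(J_{\chi})}\fdeg(\pi')$, and then quotes the already proved HII conjecture for unipotent representations (Feng--Opdam--Solleveld) for $\pi'$, so that only the bookkeeping of $S^{\sharp}$, $\dim\rho$, and the ramified $\varepsilon$-factor remains. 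Your plan instead proposes to recompute $\fdeg(V_\pi)$ as a residue of the $\mu$-function at a residual point and to match it term by term against the adjoint $\gamma$-factor. The genuine gap is there: the residue of $\mu$ at a residual point controls the formal degrees of the finitely many discrete series with that central character only up to rational constants depending on the individual member of the packet, and identifying that constant as $\dim(\rho_\pi)\,\abs{S^{\sharp}_{\varphi_\pi}}^{-1}$ is exactly the hard content of the Iwahori-spherical/unipotent case of HII; it cannot simply be ``read off from the classification''. As written, your heart-of-the-argument step amounts to re-proving the theorem of \cite{OnFormDegrUnip} by hand without addressing the difficulty that theorem resolves, which is precisely what the paper's detour through the group $H$ is designed to avoid.

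A second concrete problem is your description of the ramified part as ``Tate $\gamma$-factors of tame characters''. Roche's Condition \ref{Condition} restricts $p$ relative to the root system but puts no bound on the depth of $\chi$, so the characters $\chi\circ\alpha^{\vee}$ with $c_{\alpha}\neq 0$ may be wildly ramified. Their $L$-factors are trivial, and the entire ramified contribution is the $\varepsilon$-factor $q^{a/2}$ with $a=\sum_{\alpha,\,c_{\alpha}\neq 0}(c_{\alpha}+1)$; obtaining this exponent requires the Artin-conductor computation via the ramification filtration and the Herbrand function (together with depth preservation of the LLC for tori), because it must cancel both the index $[\mathcal{I}:J_{\chi}]=q^{\sum_{\alpha\in\Phi^+}c_{\alpha}}$ and the power of $q$ coming from the dimension drop $\dim G-\dim H$ in the normalization of the Haar measures. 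Under a tameness assumption the powers of $q$ match only when every $c_{\alpha}\leq 1$. Two smaller remarks: the Hecke algebra arising here has equal parameters $q$ (it is the Iwahori--Hecke algebra of a split group), so the unequal-parameter complication you anticipate does not occur; and for the disconnected-center step the paper argues by Clifford theory inside $H=H^{\circ}\rtimes C_{\chi}$ (induction from $H^{\circ}\rtimes C_{\nu_\pi}$, the identity $\fdeg(\nu_\pi\otimes\sigma)=\fdeg(\nu_\pi)/\abs{C_{\nu_\pi}}$, and compatibility of the parametrization with induction of enhancements), which is your second alternative; a $z$-extension route would additionally require justifying that both sides of the HII identity behave correctly under pullback along the extension.
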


This conjecture was recently proved for unipotent representations by Feng, Opdam and Solleveld \cite{OnFormDegrUnip}, and their work is the starting point for this paper. In fact, we combine their result with some of Roche's previous results, in order to prove the HII conjecture for principal series. In particular, we use Roche's equivalence of categories with Iwahori-Hecke algebras of (possibly disconnected) groups, to transfer the results from \cite{OnFormDegrUnip} to principal series. This leads to the main result of this paper:

\begin{theorem}
    Let $\mathcal{G}$ be a connected reductive $K$-split group and write $G:=\mathcal{G}(K).$ We consider the local Langlands correspondence for principal series representations as in \cite{PrinicipalDisconencted}. Then the HII conjecture \ref{HII} is true for every irreducible discrete series contained in a parabolically induced representation from a maximal torus of $G$.
\end{theorem}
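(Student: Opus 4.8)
The plan is to reduce the principal-series case to the already-established unipotent case via Roche's type theory, using the Feng--Opdam--Solleveld result \cite{OnFormDegrUnip} as the engine. Let $\pi$ be an irreducible discrete series lying in a principal series Bernstein component $\mathfrak{s}=[\mathcal{T},\chi]$. By Roche's theorem \cite{RochePrincipal}, the Bernstein block $\Rep(G)^{\mathfrak{s}}$ is equivalent to the module category of an Iwahori--Hecke algebra $\mathcal{H}(\mathcal{G}',I')$ attached to a (possibly disconnected) split reductive group $\mathcal{G}'$ over $K$ and an Iwahori subgroup $I'$ — concretely, $\mathcal{G}'$ is built from the root subsystem of characters $\alpha$ with $\chi\circ\alpha^{\vee}$ trivial, so the block looks like a block of \emph{unipotent} representations of $\mathcal{G}'(K)$. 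First I would make this equivalence explicit, tracking the correspondence of parameters and checking that discrete series go to discrete series; then I would invoke \cite{OnFormDegrUnip} for $\mathcal{G}'$ to get the HII identity for the transferred representation $\pi'$, and finally I would compare both sides of the HII formula across the equivalence.

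The key intermediate steps, in order, are as follows. \emph{Step 1:} Recall Roche's construction and fix, for the component $\mathfrak{s}$, the auxiliary group $\mathcal{G}'$ and the support-preserving equivalence $\Rep(G)^{\mathfrak{s}}\simeq\Rep(\mathcal{G}'(K))^{\mathrm{unip}\text{-block}}$; identify $\pi$ with a unipotent discrete series $\pi'$ of $\mathcal{G}'(K)$. \emph{Step 2:} Compare formal degrees. The equivalence is realized by an idempotent (a type $(I',\chi')$ in the sense of Bushnell--Kutzko), and formal degree transfers up to an explicit constant depending on the Plancherel measure of the relevant block, i.e. $\fdeg(\pi)=c\cdot\fdeg(\pi')$ with $c$ expressible via $\vol(I')$ and the index of the type; one must pin down $c$ precisely. \emph{Step 3:} Compare the Langlands parameters. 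Under the LLC of \cite{PrinicipalDisconencted} (which on principal series components is set up to be compatible with Reeder's \cite{reeder2002isogenies} and hence with the Kazhdan--Lusztig parametrization used for $\mathcal{G}'$), the enhanced parameter $(\varphi_{\pi},\rho_{\pi})$ is obtained from the enhanced parameter $(\varphi_{\pi'},\rho_{\pi'})$ of $\pi'$ by a twist and an embedding of dual groups $(\mathcal{G}')^{\vee}\hookrightarrow G^{\vee}$; in particular $\dim\rho_{\pi}=\dim\rho_{\pi'}$ and the component groups $S_{\varphi_{\pi}}^{\sharp}$, $S_{\varphi_{\pi'}}^{\sharp}$ match. \emph{Step 4:} Compare adjoint $\gamma$-factors. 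Decompose $\Ad_{G^{\vee}}\circ\varphi_{\pi}$ along the Levi/root-subsystem structure: the part coming from $\Ad_{(\mathcal{G}')^{\vee}}\circ\varphi_{\pi'}$ gives $|\gamma(0,\Ad_{(\mathcal{G}')^{\vee}}\circ\varphi_{\pi'},\psi)|$, and the remaining root spaces contribute unramified local factors whose product accounts exactly for the discrepancy constant $c$ from Step 2. Assembling Steps 2--4 and feeding in \cite{OnFormDegrUnip} for $\pi'$ yields the HII identity for $\pi$; the stated proof strategy of the paper — first assuming $Z(\mathcal{G})$ connected (where Reeder's LLC applies directly), then bootstrapping to the general case via \cite{PrinicipalDisconencted} — fits this outline, the disconnected-center case requiring an extra comparison of component groups on both sides.

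The main obstacle I expect is \emph{Step 4} combined with the bookkeeping of the constant in \emph{Step 2}: one must show that the Jacobian/volume factor relating $\fdeg(\pi)$ to $\fdeg(\pi')$ is \emph{exactly} the product of the absolute values of the unramified $\gamma$-factors attached to the root spaces of $G^{\vee}$ that are ``killed'' in passing to $(\mathcal{G}')^{\vee}$ — i.e. the roots $\alpha$ with $\chi\circ\alpha^{\vee}\neq 1$. This is where Roche's formulas for the Hecke-algebra parameters (powers of $q$ attached to each affine root) must be matched against the Langlands-side Euler factors $(1-q^{-1}\chi(\alpha^{\vee}(\varpi)))$ and the like; getting the normalizations of Haar measures, the additive character $\psi$, and the dual-group adjoint action all consistent is the delicate point. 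A secondary difficulty is verifying that the LLC of \cite{PrinicipalDisconencted} really does intertwine with the Kazhdan--Lusztig/Reeder parametrization on the $\mathcal{G}'$ side in the way Step 3 requires, especially for the enhancement $\rho_{\pi}$ and the component group $S_{\varphi_{\pi}}^{\sharp}$ when $Z(\mathcal{G})$ is disconnected; this is precisely why the theorem is proved in two stages.
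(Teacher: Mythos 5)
Your proposal follows essentially the same route as the paper: Roche's type and Hecke-algebra isomorphism $\mathcal{H}_t\cong\mathcal{H}(H,\mathcal{I}_H)$ to transfer $\pi$ to a unipotent (Iwahori-spherical) discrete series of the auxiliary group, the Feng--Opdam--Solleveld theorem as the engine, transfer of formal degrees up to the explicit constant $\vol(\mathcal{I}_H)/\vol(J_{\chi})$, matching of enhancements and of $S^{\sharp}_{\varphi}$ via the dual-group identification $H^{\vee}=Z_{G^{\vee}}(\varphi_{\chi}(I_K))$, splitting of the adjoint $\gamma$-factor, and the two-stage connected/disconnected-center argument with the extra Clifford-theoretic comparison ($[C_{\chi}:C_{\nu_\pi}]$, $\dim\rho_\pi$, $|S^{\sharp}_{\varphi_{\pi'}}|\cdot|C_\chi|=|S^{\sharp}_{\varphi_\pi}|$). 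The only inaccuracy is in your Step 4: the root spaces discarded in passing to $H^{\vee}$ are exactly the \emph{ramified} ones ($c_\alpha\neq 0$), so their $\gamma$-factors have trivial $L$-functions and reduce to $\varepsilon$-factors computed by Artin conductors (via depth preservation of the LLC for tori and the Herbrand function), and it is $q^{a/2}$ that matches $\vol(\mathcal{I}_H)/\vol(J_{\chi})$ --- not unramified Euler factors of the form $\left(1-q^{-1}\chi(\alpha^{\vee}(\varpi))\right)$ balanced against Hecke-algebra parameters.
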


\begin{center}
    \textbf{Acknowledgments}
\end{center}

Thanks to my supervisor Maarten Solleveld for his help and guidance throughout the development of this paper.

\section{Types and Hecke algebras for principal series}

In this section, we review the work of Roche \cite{RochePrincipal}. The most important theorem for us is Theorem \ref{Roche}, which will allow us to use the results in \cite{OnFormDegrUnip} to study principal series. 

We now set the notation for the remainder of this paper. Let $\mathcal{G}$ be a connected split reductive group over a non-archimedean local field $K$ and set $G:=\mathcal{G}(K)$. The ring of integers of $K$ will be denoted by $\mathcal{O}_K$, its maximal ideal by $\mathfrak{p}_K$ and its residue field by $k$. We denote by $W_K$ the Weil group of $K$, by $I_K$ its inertia subgroup and we write $q:=\abs{k}$. Moreover, we fix $\Frob\in W_K$ a geometric Frobenius element. Let $\mathcal{T} \subset \mathcal{B}$ be a maximal split torus and a Borel subgroup of $\mathcal{G}$, and write $T:=\mathcal{T}(K)$, $B:=\mathcal{B}(K)$ and $W=W(G,T)$ for the Weyl group of $G$. Let $\chi: T\to\mathbb{C}^{\times}$ be any character of $T$ and let $\varphi_{\chi}$ be the $L$-parameter associated to $\chi$. Finally, we will write $T^0$ for the maximal compact subgroup of $T$.

For the remainder of this paper, we impose the same condition on the characteristic of $k$ as in \cite[Page 179]{RochePrincipal}:

\begin{condition}\label{Condition}
    Let $\Phi:=\Phi(G,T)$ be the root system of $G$ associated to $T.$ Then, if $\Phi$ is irreducible we assume: \begin{itemize}
        \item If $\Phi$ is of type $A_n$, then $p>n+1.$
        \item If $\Phi$ is of type $B_n$, $C_n$ or $D_n$ then $p\neq 2.$
        \item If $\Phi$ is of type $F_4$, then $p\neq 2,3.$
        \item If $\Phi$ is of type $G_2$ or $E_6$, then $p\neq 2,3,5.$
        \item If $\Phi$ is of type $E_7$ or $E_8$, then $p\neq 2,3,5,7.$
    \end{itemize}

If $\Phi$ is not irreducible, we exclude primes attached to each of its irreducible factors.
\end{condition}

Let $\Rep(G)$ denote the category of smooth representations of $G$, and write $\Irr(G)$ for the set of equivalence classes of irreducible objects of $\Rep(G).$ We denote by $\mathcal{H}(G)$ the Hecke algebra of $G,$ so the convolution algebra of compactly supported, locally constant functions $f:G\to\mathbb{C}.$ We recall that there is a well-known equivalence between $\Rep(G)$ and the category of non-degenerate modules over $\mathcal{H}(G)$.

The category $\Rep(G)$ has a well-known decomposition: consider the pairs $(L,\sigma)$ where $L$ consists of the $F$-points of a Levi subgroup of $\mathcal{G}$ and $\sigma$ is an irreducible supercuspidal representation of $L$. Denote by $[L,\sigma]_G$ the set of pairs $(L',\sigma')$ such that $L'$ consists of the $F$-points of a Levi subgroup of $\mathcal{G}$, $\sigma'$ is an irreducible, supercuspidal representation of $L'$ and $(L',\sigma')=(gLg^{-1},g\sigma g^{-1}\otimes \nu)$ for some $g\in G$ and some unramified character $\nu$ of $L'.$ We call $[L,\sigma]_G$ an inertial equivalence class of $G$. The set of inertial equivalence classes of $G$ is called the \textbf{Bernstein spectrum} of $G$ and it is denoted by $\mathfrak{Be}(G).$

We say that a smooth irreducible representation $\pi\in \Rep(G)$ has inertial support $\mathfrak{s}=[L,\sigma]_G$ if $\pi$ appears as a subquotient of a representation parabolically induced from some element of the class $[L,\sigma]_G$. Define a full subcategory $\Rep(G)^{\mathfrak{s}}$ of $\Rep(G)$ as the category of representations for which each irreducible subquotient has inertial support $\mathfrak{s}.$ Then we have a decomposition $$\Rep(G)=\prod_{\mathfrak{s}\in\mathfrak{Be}(G)}\Rep(G)^{\mathfrak{s}}.$$

This is called the \textbf{Bernstein decomposition} of $G$. From now on, we are going to write $\mathfrak{s}_{\chi}=\mathfrak{s}:=[T,\chi]_G.$

\begin{definition}
    A \textbf{type} $t=(J,\sigma)$ for $G$ is a pair consisting of a compact open subgroup $J\subset G$ and an irreducible representation $\sigma$ of $J,$ satisfying the following:
    
    let $e_t\in\mathcal{H}(G)$ be the idempotent defined by  $$e_t(x)=\begin{cases}\frac{\Tr(\sigma(x^{-1}))}{\vol(J)}\text{ if } x\in J;\\
    0\hspace{1.5cm}\text{ else,} 
    \end{cases}$$
    let $R(G)_t$ be the full subcategory of $R(G)$ consisting of representations $(\pi, V)$ such that $\mathcal{H}(G)(e_tV)=V$, and let $\mathcal{H}_t=e_t\mathcal{H}(G)e_t$. Then the functor $R(G)_t\to \mathcal{H}_t\text{-}\modp$ given by $V\mapsto e_t(V)$ is an equivalence of categories. 

\end{definition}

A type $t$ is said to be a type for the inertial equivalence class $\mathfrak{s}$ (or just an $\mathfrak{s}$-type) if there is an equality between $R(G)_t$ and $R(G)^{\mathfrak{s}}$ as subcategories of $R(G).$  Roche constructed a $\mathfrak{s}$-type $(J_{\chi},\sigma_{\chi})$ as follows.

Given a smooth character $\lambda:\mathcal{O}_K^{\times}\to\mathbb{C}^{\times}$ the \textbf{conductor} of $\lambda$, denoted $\textnormal{cond}(\lambda)$, is defined as the least integer $n\geq 1$ such that $1+\mathfrak{p}_K^n\subset\ker(\lambda)$. For $\alpha\in\Phi,$ we define $c_{\alpha}=c_{\alpha}^{\chi}:=\textnormal{cond}(\chi\circ\alpha^{\vee}|_{\mathcal{O}_K^{\times}})$ and if $\chi\circ\alpha^{\vee}|_{\mathcal{O}_K^{\times}}=1$ we fix $c_{\alpha}=0.$  We define the function $f_{\chi}:\Phi\to\mathbb{Z}$ as

$$f_{\chi}(\alpha)=\begin{cases}\lfloor c_{\alpha}/2\rfloor\text{ if }\alpha\in\Phi^+\\ \min\{1,\lfloor(c_{\alpha}+1)/2\rfloor\}\text{ if }\alpha\in\Phi^-.
\end{cases}$$ 

The function $f_{\chi}$ is concave \cite[Lemma 3.4]{RochePrincipal}, and we can define the subgroups $$U_{f_{\chi}}=U_{\chi}:=\bigl\langle U_{\alpha,f_{\chi}(\alpha)}\bigl\rangle_{\alpha\in\Phi}$$ 
$$J_{\chi}:=\bigl\langle U_{\chi},T^0\bigl\rangle$$ 
$$T_{\chi}:=\bigl\langle\bigcup_{\alpha\in \Phi}\alpha^{\vee}(1+\mathfrak{p}_K^{c_{\alpha}})\bigl\rangle.$$

There is an isomorphism $J_{\chi}/U_{\chi}\cong T^0/T_{\chi}$ \cite[Lemma 3.2]{RochePrincipal} and we can inflate $\chi$ to a representation $\sigma_\chi$ of $J_{\chi}.$ Roche showed that $t_{\chi}=t:=(J_{\chi}, \sigma_{\chi})$ is a $\mathfrak{s}$-type. Therefore, we obtain an equivalence of categories between modules over the Hecke algebra $\mathcal{H}_t$ and representations of $G$ occurring in $\Ind_B^G(\chi).$ 

The key point for us is that $\mathcal{H}_t$ is isomorphic to an Iwahori-Hecke algebra of some other group $H$. If $\mathcal{I}_H$ is an Iwahori subgroup of a reductive split $K$-group $H,$ by the Iwahori-Hecke algebra of $H$ we mean the convolution algebra of compactly supported, $\mathcal{I}_H$-biinvariant functions $f:G\to\mathbb{C},$ denoted by $\mathcal{H}(H,\mathcal{I}_H)$.

We now outline the construction of $H$ given in \cite[\S 7]{RochePrincipal}. We will use the following notation:

    $$\Phi_{\chi}:=\{\alpha\in\Phi\mid\chi\circ\alpha^{\vee}(\mathcal{O}_K^{\times})=1\},$$
    $$\Psi_{\chi}:=(X^*(T),\Phi_{\chi}, X_*(T), \Phi_{\chi}^{\vee}),$$
    $$W_{\chi}:=\bigl\langle s_\alpha\in W\mid \alpha\in\Phi_{\chi}\bigl\rangle.$$

Let $H^\circ$ be the group of $K$-points of the reductive split group over $K$ corresponding to the root datum $\Psi_\chi.$ Notice that $H^{\circ}$ contains $T$ as a maximal torus. The set of positive roots in $\Phi^+\subset\Phi$ defined by $B,$ defines a set of positive roots $\Phi_{\chi}^+:=\Phi^+\cap\Phi_{\chi}$ in $\Phi_{\chi}$, a basis $\Delta_{\chi}$ of $\Phi_{\chi}$ and a Borel subgroup $B_H\subset H^{\circ}.$ Let $$C_{\chi}:=\{w\in W_{\chi}\mid w\Phi^+_{\chi}=\Phi^+_{\chi}\}.$$ This group naturally acts on $\Psi_{\chi}$ and preserves a basis, so we can embed it into the automorphisms of a pinning $(H^{\circ},B_H, \{x_{\alpha}\}_{\alpha\in\Delta_{\chi}}).$ This defines an action of $C_{\chi}$ on $H^{\circ},$ and we can form the semidirect product $$H:= H^{\circ}\rtimes C_{\chi}.$$ We denote the Iwahori subgroup of $H^{\circ}$ defined by the set of positive roots $\Phi_{\chi}^+$ by $\mathcal{I}_H$.

Let $T^{\vee}$ the torus dual to $T$. It is a well-known fact \cite[Theorem 2.2]{humphreysSemisim} that for any subset $A\subset T^{\vee},$ the identity component $Z_{G^{\vee}}(A)^{\circ}$ of the centralizer of $A$ in $G^{\vee},$ is generated by $T^{\vee}$ and those root subgroups $U_{\alpha^{\vee}}^{\vee}$ for which $\alpha^{\vee}(s)=1$ for every $s\in A.$ In our case, $Z_{G^{\vee}}(\varphi_{\chi}(I_K))^{\circ}$ is generated by $T^{\vee}$ and those root subgroups $U_{\alpha^{\vee}}^{\vee}\subset G^{\vee}$ for which $c_{\alpha}=0.$ This means that if $\Psi(Z_{G^{\vee}}(\varphi_{\chi}(I_K))^{\circ},T^{\vee})$ is the root datum of the reductive group $Z_{G^{\vee}}(\varphi_{\chi}(I_K))^{\circ}$ with respect to $T^{\vee},$ we have 
$$\Psi(Z_{G^{\vee}}(\varphi_{\chi}(I_K))^{\circ},T^{\vee})=\Psi(H^{\circ},T)^{\vee},$$ 
and we may identify $Z_{G^{\vee}}(\varphi_{\chi}(I_K))^{\circ}$ with the complex dual group of $H^{\circ}.$ Moreover, $C_{\chi}=\pi_0(H)$ is exactly $\pi_0(Z_{G^{\vee}}(\varphi_{\chi}(I_K)))$ and we can see $H^{\vee}:=Z_{G^{\vee}}(\varphi_{\chi}(I_K))={H^{\circ}}^{\vee}\rtimes C_{\chi}$ as a complex dual group for $H.$ 

We point out that in 2022, Kaletha generalized the local Langlands conjecture to a certain class of disconnected groups \cite{kaletha2022locallanglandsconjecturesdisconnected}. In his work, he generalizes the notion of Langlands dual and complex dual of a group to his setting. Using his definitions, the complex dual of $H$ is exactly $H^{\vee}.$

 We can now construct the Iwahori-Hecke algebras $\mathcal{H}(H,\mathcal{I}_H)$ and $\mathcal{H}(H^{\circ},\mathcal{I}_H)$ of $H$ and $H^{\circ}$ respectively. The action of $C_{\chi}$ on $H^{\circ}$ induces an action on $\mathcal{H}(H^{\circ},\mathcal{I}_H),$ and we can form the twisted tensor product $\mathcal{H}(H^{\circ},\mathcal{I}_H)\rtimes C_{\chi}.$ There is an isomorphism \cite[\S 8]{RochePrincipal} $$\mathcal{H}(H,\mathcal{I}_H)\cong \mathcal{H}(H^{\circ},\mathcal{I}_H)\rtimes C_{\chi}.$$

\begin{theorem}\cite[Theorem 8.2 and \S 10]{RochePrincipal}\label{Roche}
    Let $t=(J_{\chi},\sigma_\chi)$ as above. Then there are isomorphisms $$\mathcal{H}_t\cong\mathcal{H}( {H},\mathcal{I}_H)\cong\mathcal{H}(H^{\circ},\mathcal{I}_H)\rtimes C_{\chi}$$ such that the corresponding equivalences of module categories preserve square-integrabi\-lity and formal degrees.
\end{theorem}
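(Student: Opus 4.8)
The plan is to treat the two assertions — the chain of algebra isomorphisms, and the compatibility of the resulting module equivalences with square-integrability and formal degrees — separately, and within the first to reduce everything to establishing $\mathcal{H}_t\cong\mathcal{H}(H,\mathcal{I}_H)$, since the second isomorphism $\mathcal{H}(H,\mathcal{I}_H)\cong\mathcal{H}(H^\circ,\mathcal{I}_H)\rtimes C_\chi$ has already been recorded above and follows from the Iwahori--Bruhat decomposition of $H$ along its identity component $H^\circ$.

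To identify $\mathcal{H}_t=e_t\mathcal{H}(G)e_t$ I would first determine its support and a presentation by the standard theory of covering types. A function in $\mathcal{H}_t$ is supported on the set of $g\in G$ intertwining $\sigma_\chi$, i.e.\ those with $\Hom_{J_\chi\cap gJ_\chi g^{-1}}(\sigma_\chi,{}^{g}\sigma_\chi)\neq 0$; using the explicit description of $U_\chi$, $J_\chi$ and the isomorphism $J_\chi/U_\chi\cong T^0/T_\chi$, one checks that this set is a union of double cosets indexed by the extended affine Weyl group $X_*(T)\rtimes W_\chi$ attached to $\Psi_\chi$, further twisted by the pinning automorphisms $C_\chi$. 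This already gives $\mathcal{H}_t$ the shape of an affine Hecke algebra of $\Psi_\chi$ twisted by $C_\chi$; the remaining point is to pin down the parameters. Reducing (in the spirit of Howlett--Lehrer and Morris) to rank-one computations inside the $\SL_2$- or $\mathrm{PGL}_2$-subgroups attached to the simple roots $\alpha\in\Phi_\chi$, one computes the relevant intertwining integral: because $\alpha\in\Phi_\chi$ forces $c_\alpha=0$ and hence $f_\chi(\alpha)=f_\chi(-\alpha)=0$, this integral collapses to the Iwahori computation and yields parameter $q$ for every simple reflection. Condition \ref{Condition} is precisely what is needed to make these reductions valid uniformly. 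Hence $\mathcal{H}_t$ is isomorphic to the equal-parameter affine Hecke algebra of $\Psi_\chi$ twisted by $C_\chi$, which by the Iwahori--Matsumoto presentation is $\mathcal{H}(H^\circ,\mathcal{I}_H)\rtimes C_\chi\cong\mathcal{H}(H,\mathcal{I}_H)$.

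For the analytic compatibility I would work at the level of traces and Plancherel measures. The algebra $\mathcal{H}(G)$ carries the canonical trace $f\mapsto f(1)$; its restriction to $\mathcal{H}_t$ is a faithful positive trace, and the isomorphism constructed above carries it to a fixed positive multiple of the canonical Iwahori trace on $\mathcal{H}(H,\mathcal{I}_H)$, the multiple being an explicit constant built from $\vol(J_\chi)$ and $\dim\sigma_\chi$. Upgrading the type equivalence $V\mapsto e_t V$ to the level of $*$-algebras, temperedness and discreteness of an irreducible $\pi\in\Rep(G)^{\mathfrak{s}}$ are detected by the same spectral conditions — growth of matrix coefficients corresponds to the position of the module in the support of the Hecke-algebra Plancherel measure — so $\pi$ is a discrete series representation if and only if the corresponding simple $\mathcal{H}(H,\mathcal{I}_H)$-module is a discrete series module. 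Since the formal degree is the mass the Plancherel measure assigns to such an isolated point, and the two Plancherel measures correspond under the trace-scaled isomorphism, $\fdeg(\pi)$ equals the Hecke-algebra formal degree up to the same explicit constant, which is exactly the factor absorbed by the compatible normalizations of Haar measure on $G$ and on $H$.

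The main obstacle I expect is the parameter computation in the second paragraph: proving that $\mathcal{H}_t$ genuinely has all parameters equal to $q$ and carries precisely the twist by $C_\chi$ requires a case-by-case analysis of the intertwining operators attached to the simple roots of $\Phi_\chi$, together with the reduction of the $c_\alpha$-dependent unipotent integrals to the unramified situation — this is the heaviest bookkeeping and where the restrictions on $\Char(k)$ really enter. A secondary but still delicate point is the exact value and naturality of the trace-scaling constant, since it is this constant that turns the comparison of formal degrees from a mere proportionality into an equality.
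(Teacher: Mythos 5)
The paper itself contains no proof of this theorem: it is imported wholesale from \cite{RochePrincipal} (Theorem 8.2 together with \S 10), with the trace and Plancherel normalizations that make ``preserves formal degrees'' precise handled afterwards via \cite{TypesPlancherel}. Your sketch follows essentially the same route as Roche's actual argument: bound the intertwining of the type so that $\mathcal{H}_t$ is supported on double cosets indexed by $(X_*(T)\rtimes W_{\chi})\rtimes C_{\chi}$, establish an Iwahori--Matsumoto style presentation whose quadratic relations have parameter $q$ via rank-one reductions, identify the result with $\mathcal{H}(H^{\circ},\mathcal{I}_H)\rtimes C_{\chi}\cong\mathcal{H}(H,\mathcal{I}_H)$, and then transfer square-integrability and formal degrees through the resulting support-preserving, trace-compatible isomorphism, exactly in the spirit of the type-theoretic Plancherel comparison.

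Two local points in your outline need repair. First, the rank-one step is inconsistent as written: if one really had $f_{\chi}(\alpha)=f_{\chi}(-\alpha)=0$ for $\alpha\in\Phi_{\chi}$, then $J_{\chi}$ would contain both $U_{\alpha,0}$ and $U_{-\alpha,0}$, the double coset of $s_{\alpha}$ would collapse, and the rank-one picture would be the spherical (maximal compact) one rather than the Iwahori one --- in the extreme case of unramified $\chi$ your $J_{\chi}$ would be hyperspecial instead of the Iwahori subgroup, and $\mathcal{H}_t$ could not be the Iwahori--Hecke algebra of $H$. In Roche's construction $f_{\chi}$ takes the value $1$ on negative roots with $c_{\alpha}=0$ (the formula as transcribed above should be read with this asymmetry), and it is precisely that asymmetry which makes the computation reduce to the genuine Iwahori computation with parameter $q$. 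Second, Condition \ref{Condition} is not primarily about those rank-one reductions: its essential use is in the intertwining/support bound, i.e.\ in guaranteeing that the stabilizer of $\chi|_{T^0}$ in $W$ is no larger than $W_{\chi}\rtimes C_{\chi}$, so that no extra double cosets support $\mathcal{H}_t$; for small $p$ this can fail and the statement itself would break. Finally, note that ``preserve formal degrees'' in the statement means equality of the Hecke-algebra formal degrees taken with respect to the canonical normalized traces; the volume factors $\vol(J_{\chi})$ and $\vol(\mathcal{I}_H)$ which you fold into a trace-scaling constant are deliberately kept out of the theorem and reinstated separately in the discussion following it.
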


Roche went on to study the group $C_{\chi}$ and proved that $C_{\chi}$ can be embedded in a quotient of a maximal compact subgroup of a certain torus, proving that $C_{\chi}$ is always abelian. Moreover, this quotient is trivial whenever the center of $\mathcal{G}$ is connected. Therefore, under this hypothesis, $C_{\chi}$ is trivial and $H= H^{\circ}.$ He also proved that the converse is true as well, meaning that if the center of $\mathcal{G}$ is disconnected, then $C_{\chi}$ is non-trivial for some $\chi.$

\section{Langlands correspondence for principal series}

In this section, we summarize some results from \cite{PrinicipalDisconencted}. Recall that a $L$-parameter for $G$ is a continuous homomorphism $$\phi:W_K\times\SL_2(\mathbb{C})\to G^{\vee}$$ with some extra properties:\begin{itemize}
    \item $\phi|_{\SL_2(\mathbb{C})}$ is algebraic;
    \item $\phi(W_K)$ consists of semisimple elements.
\end{itemize}
We say that $\phi$ is \textbf{discrete} if it does not factor through the $L$-group of any proper Levi subgroup of $G^{\vee}$. Equivalently, $\phi$ is discrete if its centralizer in $G^{\vee}$ is finite modulo the center of $G^{\vee}$. 
An irreducible representation $\rho_{\phi}\in \Irr(\pi_0(Z_{G^{\vee}}(\phi)))$ of the component group of the centralizer of $\phi$ in $G^{\vee}$ is called an \textbf{enhancement} of $\phi.$ We denote the set $G^{\vee}$-conjugacy classes of enhanced $L$-parameters of $G^{\vee}$ by $\Phi_e(G).$

Reeder parametrized the irreducible subquotients of a principal series via what we call \textbf{Kazhdan-Lusztig-Reeder parameters} (KLR-parameters for short): let $\phi$ be a Langlands parameter. The variety $\mathcal{B}_{G^{\vee}}^{\phi}$ of Borel subgroups of $G^{\vee}$ containing the image of $\phi$ is non-empty if and only if the image $\phi(W_K)$ is contained in some maximal torus of $G^\vee$. If $\mathcal{B}_{G^{\vee}}^{\phi}\neq\emptyset,$ the centralizer $Z_{G^{\vee}}(\phi)$ acts on it. Let $B_2\subset \SL_2(\mathbb{C})$ denote the subgroup of the upper triangular matrices. Then we have an action $\pi_0(Z_{G^{\vee}}(\phi))$ on the homology $H_*(\mathcal{B}^{\phi(W_K\times B_2)}_{G^{\vee}},\mathbb{C})$. We call an irreducible representation $\rho$ of $\pi_0(Z_{G^{\vee}}(\phi))$ \textbf{geometric} if it appears in this homology.
We define a Kazhdan-Lusztig-Reeder parameter for $G^{\vee}$ to be a pair $(\phi, \rho)$ consisting of an $L$-parameter $\phi$ and an irreducible geometric representation $\rho$ of $\pi_0(Z_{G^{\vee}}(\phi))$. Note that this forces $\mathcal{B}_{G^{\vee}}^\phi$ to be non-empty.

We have an action of $G^{\vee}$ on KLR-parameters by $$ g\cdot (\phi, \rho) = (g\phi g^{-1},\rho\circ\Ad^{-1}_g).$$ 
The inertial equivalence class $\mathfrak{s}=[T,\chi]_G,$ depends only on $\chi|_{T^0}.$ That is why we are only interested in the collection of KLR-parameters $(\phi,\rho)$ for which $\phi|_{I_K}=\varphi_{\chi}|_{I_K}.$ We write this set as $\{\text{KLR-par}\}^{\mathfrak{s}}$. This collection is not stable under $G^{\vee}$-conjugation, but it is under $H^{\vee}$-conjugation.  

Reeder defined a local Langlands correspondence using KLR-parameters, but only under the assumption that the center of $\mathcal{G}$ is connected:

\begin{theorem}\cite[Theorem 1]{reeder2002isogenies}
Assume the center of $\mathcal{G}$ is connected. Then, under Condition \ref{Condition}, the set $\Irr(G,B)$ of irreducible representations of $G$ which are subquotients of representations induced from characters of $B$ is in bijection with the set of $G^{\vee}$-conjugacy classes of KLR-parameters   
\end{theorem}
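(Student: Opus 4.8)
The plan is to prove the bijection one Bernstein block at a time and then reassemble it along the decomposition $\Irr(G,B)=\bigsqcup_{\mathfrak{s}}\Irr(G)^{\mathfrak{s}}$, where $\mathfrak{s}$ runs over the inertial classes $[T,\chi]_G$. Fix such an $\mathfrak{s}$ and a representative $\chi$ (recall that $\mathfrak{s}$ depends only on the $W$-orbit of $\chi|_{T^0}$). Roche's $\mathfrak{s}$-type $t_\chi=(J_\chi,\sigma_\chi)$ gives an equivalence $\Rep(G)^{\mathfrak{s}}\simeq\mathcal{H}_t\text{-}\modp$, and by Theorem \ref{Roche} together with the triviality of $C_\chi$ — which holds precisely because $Z(\mathcal{G})$ is connected — one has $\mathcal{H}_t\cong\mathcal{H}(H^\circ,\mathcal{I}_H)$, the equal-parameter Iwahori--Hecke algebra of the connected split reductive group $H^\circ$. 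Composing with Borel's equivalence \cite{borel1976admissible}, $\Irr(G)^{\mathfrak{s}}$ is put in bijection with the set of irreducible representations of $H^\circ$ generated by their $\mathcal{I}_H$-fixed vectors.

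Next I would invoke the Kazhdan--Lusztig classification \cite{ProofDeligneLanglandsConjecture}: that set is in bijection with the $(H^\circ)^\vee$-conjugacy classes of pairs $(\phi_H,\rho)$ in which $\phi_H\colon W_K\times\SL_2(\mathbb{C})\to(H^\circ)^\vee$ is an $L$-parameter trivial on $I_K$ — equivalently a pair $(s,u)$ with $s\in(H^\circ)^\vee$ semisimple, $u$ unipotent and $sus^{-1}=u^q$ — and $\rho\in\Irr(\pi_0(Z_{(H^\circ)^\vee}(\phi_H)))$ is geometric for $(H^\circ)^\vee$, i.e. occurs in the homology $H_*(\mathcal{B}^{\phi_H(W_K\times B_2)}_{(H^\circ)^\vee},\mathbb{C})$. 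Here one must use this classification in the generality where the derived subgroup of $(H^\circ)^\vee=Z_{G^\vee}(\varphi_\chi(I_K))^\circ$ need not be simply connected; supplying that generality is precisely the point of the isogeny arguments in \cite{reeder2002isogenies}.

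It then remains to match these pairs with the elements of $\{\text{KLR-par}\}^{\mathfrak{s}}$, using the identification recalled above of $(H^\circ)^\vee$ with $Z_{G^\vee}(\varphi_\chi(I_K))^\circ\subseteq G^\vee$ (and $H^\vee=(H^\circ)^\vee$ under our hypothesis). Given an unramified $\phi_H$, I would glue it with $\varphi_\chi$ over $I_K$: put $\phi|_{I_K}:=\varphi_\chi|_{I_K}$ and use $\phi_H$, post-composed with $(H^\circ)^\vee\hookrightarrow G^\vee$, on $\Frob$ and on $\SL_2(\mathbb{C})$. This is a legitimate $L$-parameter for $G^\vee$, because the conjugation action of $\Frob$ on $I_K$ becomes trivial after composing with $\varphi_\chi$, so any element of $Z_{G^\vee}(\varphi_\chi(I_K))=H^\vee$ is admissible as $\phi(\Frob)$, while $\phi(\SL_2(\mathbb{C}))\subseteq(H^\circ)^\vee$ automatically centralizes both $\phi(I_K)$ and $\phi(\Frob)$. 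Since $\phi(I_K)=\varphi_\chi(I_K)$ one gets $Z_{G^\vee}(\phi)=Z_{G^\vee}(\phi(I_K))\cap Z_{G^\vee}(\phi(\Frob),\phi(\SL_2(\mathbb{C})))=Z_{(H^\circ)^\vee}(\phi_H)$, and every Borel of $G^\vee$ containing $\phi(W_K\times B_2)$ contains $\varphi_\chi(I_K)$, so such Borels correspond bijectively to those of $(H^\circ)^\vee$ containing $\phi_H(W_K\times B_2)$; hence geometric for $(H^\circ)^\vee$ coincides with geometric for $G^\vee$, and the enhancements match. Conversely any $(\phi,\rho)\in\{\text{KLR-par}\}^{\mathfrak{s}}$ has $\phi|_{I_K}$ conjugate to $\varphi_\chi|_{I_K}$, so $\phi$ factors through $H^\vee$ and yields, after discarding $\varphi_\chi|_{I_K}$, an unramified $(H^\circ)^\vee$-parameter. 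This gives an $H^\vee$-equivariant bijection, hence $\Irr(G)^{\mathfrak{s}}\leftrightarrow\{\text{KLR-par}\}^{\mathfrak{s}}/H^\vee$.

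Finally I would reassemble. Any KLR-parameter $(\phi,\rho)$ has $\mathcal{B}^\phi_{G^\vee}\neq\emptyset$, so $\phi(W_K)$ lies in a maximal torus; conjugating it into $T^\vee$, the restriction $\phi|_{I_K}$ has finite image there, which under local class field theory corresponds to a character of $T^0$ well defined up to $W$, i.e. to a unique $\mathfrak{s}$ — so every $G^\vee$-orbit meets $\bigsqcup_{\mathfrak{s}}\{\text{KLR-par}\}^{\mathfrak{s}}$ in exactly one stratum. And if $g\in G^\vee$ moves one element of $\{\text{KLR-par}\}^{\mathfrak{s}}$ to another, then $g$ fixes $\varphi_\chi|_{I_K}$ pointwise, so $g\in Z_{G^\vee}(\varphi_\chi(I_K))=H^\vee$; thus $G^\vee$-conjugacy on $\{\text{KLR-par}\}^{\mathfrak{s}}$ is the same as $H^\vee$-conjugacy. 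Putting the pieces together gives the claimed bijection. The main obstacle is the second step: running the Kazhdan--Lusztig classification on Roche's abstract $H^\circ$ — whose dual is a general pseudo-Levi of $G^\vee$, potentially with non-simply-connected derived subgroup — and carefully matching the dual-side data (component groups of centralizers, and above all the notion of a geometric enhancement through Springer-type homology) with the corresponding data inside $G^\vee$; once that is secured, the glueing with $\varphi_\chi|_{I_K}$ and the bookkeeping over inertial classes are essentially formal.
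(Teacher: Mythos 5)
Your outline follows essentially the same route as the proof this paper relies on: the statement is quoted from Reeder, and his argument is exactly the chain you describe — Bernstein decomposition, Roche's $\mathfrak{s}$-type and the isomorphism $\mathcal{H}_t\cong\mathcal{H}(H^{\circ},\mathcal{I}_H)$ (with $C_{\chi}$ trivial because $Z(\mathcal{G})$ is connected), the Kazhdan--Lusztig classification extended by isogeny arguments to the case where $H^{\vee}_{\der}$ is not simply connected, and finally the gluing of unramified parameters for $H^{\vee}$ with $\varphi_{\chi}|_{I_K}$ to obtain KLR-parameters for $G^{\vee}$, together with the observation that $G^{\vee}$-conjugacy between elements of $\{\text{KLR-par}\}^{\mathfrak{s}}$ reduces to $H^{\vee}$-conjugacy. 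You rightly identify the extension of the Kazhdan--Lusztig classification as the real content, and it is legitimate to invoke it here since the statement under review is itself a citation.

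One step is stated incorrectly, though it is repairable: the Borel subgroups of $G^{\vee}$ containing $\phi(W_K\times B_2)$ are \emph{not} in bijection with the Borel subgroups of $(H^{\circ})^{\vee}$ containing $\phi_H(W_K\times B_2)$. The fixed-point variety $\mathcal{B}_{G^{\vee}}^{\varphi_{\chi}(I_K)}$ is a disjoint union of finitely many components, each a single $H^{\vee}$-orbit isomorphic to the full flag variety of $H^{\vee}$ (already for $G^{\vee}=\SL_2$ and $\varphi_{\chi}(I_K)$ generated by a regular element of order $4$ the intersection map is $2$-to-$1$). What saves the argument is that, with $Z(\mathcal{G})$ connected, $H^{\vee}=Z_{G^{\vee}}(\varphi_{\chi}(I_K))$ is connected, hence preserves each component; taking fixed points of $\phi(\Frob)$ and $\phi(\SL_2(\mathbb{C}))$ componentwise one gets $H_*(\mathcal{B}_{G^{\vee}}^{\phi},\mathbb{C})\cong\bigoplus_i H_*(\mathcal{B}_{H^{\vee}}^{\phi_H},\mathbb{C})$ as $\pi_0(Z_{G^{\vee}}(\phi))=\pi_0(Z_{H^{\vee}}(\phi_H))$-modules, so the two notions of geometric enhancement have the same irreducible constituents even though the varieties differ. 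This multiplicity bookkeeping is precisely what is handled in Reeder's paper and in \cite[Lemma 7.1 and \S 7--9]{PrinicipalDisconencted}; with that correction your reassembly over inertial classes goes through as written.
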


Reeder generalizes the classification of simple $\mathcal{H}(H^{\circ},\mathcal{I}_H)$-module done in \cite{ProofDeligneLanglandsConjecture} to the case in which the derived subgroup $H_{\der}^{\vee}$ of ${H^{\circ}}^{\vee}$ is not simply-connected \cite[Theorem 2]{reeder2002isogenies}. He proves in fact that, if $M$ is any connected reductive complex algebraic group and $\mathcal{M}$ is the affine Hecke algebra whose root datum is that of $M$ with constant parameter $q$ (or any non-root of unity in $\mathbb{C}^{\times}$), then the simple $\mathcal{M}$-modules are in bijection with $M$-conjugacy classes of triples $(t,x,\rho)$, where $t\in K$ is semisimple, $txt^{-1}= x^q$, and $\rho$  is an irreducible representation of $Z_M(t,x)$ appearing in the homology of $B_M^{t,x}.$ 

In 2017, Aubert-Baum-Plymen-Solleveld generalized these results via some geometric arguments \cite{PrinicipalDisconencted}. First, they defined Kazhdan–Lusztig triples for the disconnected groups. Let $M$ a connected reductive complex group and let $\Gamma$ a finite group of pinned automorphisms. 

\begin{definition}
        A Kazhdan–Lusztig triple $(t,x,\rho)$ for $\mathcal{H}(M)\rtimes \Gamma$ (or just a Kazhdan-Lusztig triple for $M\rtimes \Gamma$) consists of: \begin{itemize}
        \item A semisimple element $t\in M$ and a unipotent element $x\in M$ such that $txt^{-1}=x^q.$
        \item An irreducible representation $\rho$ of the component group $\pi_0(Z_{M\rtimes \Gamma}(t,x))$, such that every irreducible subrepresentation of the restriction of $\rho$ to $\pi_0(Z_{M}(t,x))$ appears in $H_*(\mathcal{B}_M^{t,x}; \mathbb{C}).$
    \end{itemize}
\end{definition}

Whenever $\Gamma$ is trivial, we have the usual definition of Kazhdan-Lusztig triple for connected groups. 

\begin{theorem}\label{KLtriple}\cite[Theorem 9.1]{PrinicipalDisconencted} There exists a natural bijection between $\Irr(\mathcal{H}(H^{\circ},\mathcal{I}_H)\rtimes C_{\chi})$ and $H^{\vee}$-conjugacy classes of Kazhdan–Lusztig triples.    
\end{theorem}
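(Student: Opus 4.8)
\emph{Proof proposal.} The plan is to reduce to the connected case, where the Kazhdan--Lusztig classification is available in the form due to Reeder \cite[Theorem 2]{reeder2002isogenies}, and then run Clifford theory for the finite abelian group $C_{\chi}$ on both sides of the asserted bijection. Write $M:={H^{\circ}}^{\vee}$, so that $\mathcal{H}(H^{\circ},\mathcal{I}_H)$ is the affine Hecke algebra with root datum that of $M$ and equal parameter $q$, and $H^{\vee}=M\rtimes C_{\chi}$. On the algebraic side one has $\mathcal{H}(H^{\circ},\mathcal{I}_H)\rtimes C_{\chi}$ with its $C_{\chi}$-action through pinned automorphisms of $M$; on the geometric side one has the $H^{\vee}$-conjugacy classes of Kazhdan--Lusztig triples $(t,x,\rho)$ for $M\rtimes C_{\chi}$, with $C_{\chi}$ acting by conjugation on the pairs $(t,x)$ and correspondingly on $H_*(\mathcal{B}_M^{t,x};\mathbb{C})$. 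The goal is to identify these two Clifford-theoretic pictures.

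First I would invoke Reeder's theorem: $\Irr(\mathcal{H}(H^{\circ},\mathcal{I}_H))$ is in natural bijection with $M$-conjugacy classes of triples $(t,x,\rho_0)$, the simple module attached to such a triple being realized inside a localization of the equivariant Borel--Moore homology of the Steinberg variety of $M$, with $\rho_0$ singling out the relevant summand of $H_*(\mathcal{B}_M^{t,x};\mathbb{C})$. This is exactly where one uses Reeder's removal of the assumption that $H_{\der}^{\vee}=M_{\der}$ be simply connected. Since $C_{\chi}$ acts on $\mathcal{H}(H^{\circ},\mathcal{I}_H)$ through pinned automorphisms of $M$, and the whole geometric construction --- Steinberg variety, Springer fibres, equivariant homology, the resulting modules --- is functorial for pinned automorphisms, the $C_{\chi}$-action on $\Irr(\mathcal{H}(H^{\circ},\mathcal{I}_H))$ corresponds under this bijection to the action of $C_{\chi}$ on $M$-conjugacy classes of triples induced by the dual pinned action on $(t,x,\rho_0)$.

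Next I would feed this into Clifford theory for the crossed product. For a $C_{\chi}$-orbit of simple $\mathcal{H}(H^{\circ},\mathcal{I}_H)$-modules, fix a representative $V$ and let $C_{\chi}^{V}\subseteq C_{\chi}$ be its stabilizer; the simple $\mathcal{H}(H^{\circ},\mathcal{I}_H)\rtimes C_{\chi}$-modules over this orbit are obtained by inducing the extensions of $V$ to $\mathcal{H}(H^{\circ},\mathcal{I}_H)\rtimes C_{\chi}^{V}$, and these extensions form a torsor under $\Irr$ of a twisted group algebra $\mathbb{C}[C_{\chi}^{V},\kappa_V]$. On the geometric side the same orbit is a conjugacy class of pairs $(t,x)$ with $\rho_0\in\Irr(\pi_0(Z_M(t,x)))$ and inertia subgroup $\Gamma_{t,x,\rho_0}\subseteq C_{\chi}$; the extension of component groups
\begin{equation*}
1\longrightarrow \pi_0(Z_M(t,x))\longrightarrow \pi_0(Z_{M\rtimes\Gamma_{t,x,\rho_0}}(t,x))\longrightarrow \Gamma_{t,x,\rho_0}\longrightarrow 1
\end{equation*}
together with ordinary Clifford theory shows that the irreducible $\rho$ of $\pi_0(Z_{H^{\vee}}(t,x))$ restricting to a multiple of $\rho_0$ --- which, since the full stabilizer of $(t,x)$ permutes the isotypic components of $H_*(\mathcal{B}_M^{t,x};\mathbb{C})$, are precisely the $\rho$ admissible in a Kazhdan--Lusztig triple for $M\rtimes C_{\chi}$ lying over $\rho_0$ --- form a torsor under $\Irr$ of a twisted group algebra $\mathbb{C}[\Gamma_{t,x,\rho_0},\kappa'_{t,x,\rho_0}]$. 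By the previous step $C_{\chi}^{V}=\Gamma_{t,x,\rho_0}$ when $V$ corresponds to $(t,x,\rho_0)$, so both sides of the claimed bijection are now described as a $C_{\chi}$-orbit together with an irreducible module over a twisted group algebra of its stabilizer.

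The crux, which I expect to be the only genuinely hard point, is to show that $\kappa_V$ and $\kappa'_{t,x,\rho_0}$ represent the same class in $H^2(C_{\chi}^{V},\mathbb{C}^{\times})$, so that the two twisted group algebras become canonically isomorphic. Both cocycles record the obstruction to promoting an honest $C_{\chi}^{V}$-action onto a canonical transverse space: on the algebraic side the space of intertwiners implementing the extensions of $V$; on the geometric side $\Hom_{\pi_0(Z_M(t,x))}(\rho_0,H_*(\mathcal{B}_M^{t,x};\mathbb{C}))$, on which the inertia subgroup acts only up to Schur scalars coming from the ambiguity of lifting to $\pi_0(Z_{H^{\vee}}(t,x))$. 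I would compute both from a single source --- the honest action of $H^{\vee}=Z_{G^{\vee}}(\varphi_{\chi}(I_K))$ on the $C_{\chi}$-equivariant homology of the Steinberg variety of $M$ --- and trace it through Reeder's realization to see that $\kappa_V$ and $\kappa'_{t,x,\rho_0}$ pull back from one and the same class, also checking independence of the choices of $V$ and of a representative $(t,x)$ within its $H^{\vee}$-orbit. Once the cocycles match, inducing up on both sides produces the asserted bijection; naturality holds because it is assembled from the functorial geometric construction, hence equivariant for unramified twists and compatible with central characters, which is what lets it be read, via Theorem \ref{Roche}, as a parametrization of $\Rep(G)^{\mathfrak{s}}$.
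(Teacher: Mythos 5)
Your overall route --- reduce to Reeder's connected-case classification \cite[Theorem 2]{reeder2002isogenies}, use that $C_{\chi}$ acts through pinned automorphisms so the geometric construction is equivariant, and then run Clifford theory for the abelian group $C_{\chi}$ simultaneously on simple modules and on triples --- is indeed the strategy of the cited source; note that the present paper does not reprove this theorem but quotes it from \cite{PrinicipalDisconencted}, describing exactly this construction (take a Kazhdan--Lusztig triple for $M={H^{\circ}}^{\vee}$ and extend the enhancement). The problem is that the step you yourself identify as the crux is never carried out: you assert that the bijection follows \emph{once} the two obstruction classes $\kappa_V$ and $\kappa'_{t,x,\rho_0}$ in $H^2(C_{\chi}^{V},\mathbb{C}^{\times})$ are shown to coincide, but you only outline a plan (``I would compute both from a single source'') without giving an argument. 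As it stands, the two Clifford-theoretic descriptions are not actually identified, so no bijection is produced; this is a genuine gap, not a routine verification, and it is exactly the content that makes the theorem nontrivial beyond Reeder's result.

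What is actually proved in \cite{PrinicipalDisconencted} is stronger and more concrete than ``the two cocycles match'': both cocycles are trivial, so that canonical extensions exist on each side. On the geometric side this is close to immediate, since $C_{\chi}$ acts on the flag variety $\mathcal{B}_M$ through pinned automorphisms, hence the full component group $\pi_0(Z_{M\rtimes C_{\chi}}(t,x))$ acts honestly (not merely projectively) on $H_*(\mathcal{B}_M^{t,x};\mathbb{C})$. On the Hecke-algebra side one constructs explicit intertwining operators yielding a canonical extension of a simple $\mathcal{H}(H^{\circ},\mathcal{I}_H)$-module to $\mathcal{H}(H^{\circ},\mathcal{I}_H)\rtimes C_{\chi}^{V}$; this canonicity is precisely what the present paper relies on later, when it writes $\pi'$ as $\ind$ of $\nu_\pi\otimes\sigma$ and decomposes $\rho_{\pi'}$ accordingly. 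A smaller imprecision in your write-up: the restriction of an irreducible $\rho$ of $\pi_0(Z_{H^{\vee}}(t,x))$ to $\pi_0(Z_M(t,x))$ is a multiple of the sum over an orbit of some $\rho_0$, not a multiple of $\rho_0$ itself, and the admissibility condition in the definition of a Kazhdan--Lusztig triple for $M\rtimes C_{\chi}$ refers to all constituents of that orbit appearing in the homology; your torsor description only makes sense after this orbit bookkeeping is in place, and with the cocycles known to be trivial it simplifies to an honest induction of $\rho_0$ tensored with a character of the stabilizer.
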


The proof of this theorem builds on \cite{reeder2002isogenies}. They in fact construct Kazhdan-Lusztig triples $(t_q,x,\rho)$ for $M\rtimes \Gamma$ by taking a Kazhdan-Lusztig triple $(t_q,x,\rho')$ for $M,$ and then extending $\rho'.$

Since every Kazhdan-Lusztig triple can be lifted to a KLR-parameter \cite[Lemma 7.1]{PrinicipalDisconencted} this bijection yields a LLC $$\Irr(\mathcal{H}_t)\to\Irr(\mathcal{H}(H,\mathcal{I}_{H}))\to\{\text{KLR-par}\}^{\mathfrak{s}}/H^{\vee}\to \Phi_e(G),$$

again assuming Condition \ref{Condition}.

\section{The HII conjecture for principal series}

In this section we prove conjecture \ref{HII} for principal series representations. We start by recalling the definition of the formal degree. 

Let $A$ be a closed subgroup of $Z(G)$ such that $Z(G)/A$ is compact, let $\mu_{G/A}$ be a Haar measure on $G/A$ and let $(\pi,V)$ be any smooth representation of $G$. The matrix coefficient $\pi_{v,w}$ of $\pi$ with respect to $v\in V$ and $w$ in the smooth dual $V^*$ of $V$, is the function $\pi_{v,w}: G\to\mathbb{C}$ defined by $$\pi_{v,w}(x)=\bigl\langle\pi(x)v,w\bigl\rangle.$$ 
Since $\pi$ is irreducible, there is a character $\lambda$ of $Z(G)$, called the central character of $\pi$, such that $\pi(z) =\lambda(z)$ for all $z\in Z(G)$. If the central character is unitary, the function $g\mapsto\abs{\pi_{v,w}(g)}^2$ is constant on cosets of $A$, and hence defines a function on $G/A$. We say that $(\pi, V)$ is a \textbf{discrete series} (with respect to $A$) if it has a unitary central character and $$\int_{G/A}\abs{\pi_{v,w}(g)}^2d\mu_{G/A}(g)<\infty$$ for all $v\in V,$ $w\in V^*$ (notice that this condition is independent of $A$, and also of the choice of a Haar measure on $G/A$).

It can be shown that every discrete series representation is unitary, in the sense that it admits a positive-definite $G$-invariant Hermitian product. The resulting isomorphism between $V$ and $V^*$ defines a matrix coefficient $\pi_{v,w}$ for $v,w\in V$. We write $\pi_v=\pi_{v,v}.$

\begin{definition}
    Let $(\pi, V)$ be a discrete series representation of $G$. As in \cite[\S 1]{HII}, we fix $A= Z(G)^s$ the maximal $K$-split central torus in $G$. Let $\mu_{G/A}$ be a Haar measure on $G/A$. The \textbf{formal degree} $\fdeg(\pi)=\fdeg(\pi,\mu_{G/A})$ of $\pi$ is the unique number (depending on $\mu_{G/A}$) such that for all $v\in V,$$$\int_{G/A}\abs{\pi_v(g)}^2\hspace{0.1cm}d\mu_{G/A}(g)=\frac{\abs{v}^2}{\fdeg(\pi)}.$$
\end{definition}

For the remainder of this paper, we fix the same Haar measure as in \cite{HII} and \cite[\S 1, Page 6]{OnFormDegrUnip} $\mu_G$ and $\mu_{H^{\circ}}$ on $G$ and $H^\circ$ respectively. Moreover, we fix a measure $\mu_H$ on $H$ by giving $C_{\chi}$ the counting measure. Let $\pi\subset \Ind_B^G(\chi)$ be an irreducible discrete series, let $\varphi=\varphi_{\chi}$ be the $L$-parameter associated to $\chi$ and $\varphi_\pi$ the $L$-parameter associated to $\pi.$ We denote by $\pi_G$ the corresponding $\mathcal{H}_t$-module, by $\pi_H$ the associated $\mathcal{H}(H, \mathcal{I}_H)$-module, and by $\pi'$ the corresponding representation of $H$. We refer to \cite[4.2.2]{OpdamSpectralCorresp} for the definition of the formal degree of a module over a Hecke algebra.

We want to compute the formal degree of $\pi$ by computing the formal degree of $\pi'.$ The formal degree of $\pi$ and the formal degree of $\pi_G$ differ by a factor $\vol(J_{\chi}).$ In fact, in order to make the obvious embedding $\mathcal{H}_t\to \mathcal{H}(G)$ trace-preserving, we need to normalize it so that the identity element of $\mathcal{H}_t$ is sent to the identity element of $\mathcal{H}(G).$ For more information about the Plancherel measure on $\mathcal{H}(G)$ and $\mathcal{H}_t$ we refer to \cite[\S3 and \S4]{TypesPlancherel}. Analogously, the formal degree of $\pi_H$ and of $\pi'$ differ just by a factor of $\vol(\mathcal{I}_H)^{-1}.$ Using \ref{Roche}, we get the formula $$\fdeg(\pi)=\frac{\fdeg(\pi_G)}{\vol(J_{\chi})}=\frac{\fdeg(\pi_H)}{\vol(J_{\chi})}=\frac{\vol(\mathcal{I}_H)}{\vol(J_{\chi})}\fdeg(\pi').$$

\begin{remark}
    Notice that we can compute the volume of $J_{\chi}$ in terms of the Iwahori subgroup $\mathcal{I}=\bigl\langle T^0,\{U_{\alpha,0}\}_{\alpha\in\Phi}\bigl\rangle:$ 

    $$\vol(J_{\chi})=\frac{\vol(\mathcal{I})}{[I:J_{\chi}]}=\frac{\vol(\mathcal{I})}{q^{\sum_{\alpha\in\Phi}f_{\chi}(\alpha)}}.$$ Moreover, $f_{\chi}(\alpha)+f_{\chi}(-\alpha)=c_{\alpha},$ therefore we have $$\vol(J_{\chi})=\frac{\vol(\mathcal{I})}{q^{\sum_{\alpha\in\Phi^+}c_{\alpha}}}.$$
\end{remark}

We now proceed to analyze the adjoint $\gamma$-factor. We write $\mathfrak{g}^{\vee}:=\Lie(G^{\vee}),$ we fix $\psi$ an order-0 additive character of $K$, and we recall from \cite{Tate} the following meromorphic functions in $s\in\mathbb{C}$: $$L(s,\Ad_{G^{\vee}}\circ \varphi)=\det\left(1-q^{-s}(\Ad_{G^{\vee}}\circ\varphi(\Frob)|_{(\mathfrak{g}^{\vee})^{\varphi(I_K)}})\right)^{-1},$$
$$\gamma(s,\Ad_{G^{\vee}}\circ\varphi,\psi)=\varepsilon(s,\Ad_{G^{\vee}}\circ\varphi,\psi)\frac{L(1-s,\Ad_{G^{\vee}}\circ \varphi)}{L(s,\Ad_{G^{\vee}}\circ\varphi)},$$ where $\varepsilon$ is the local factor defined in \cite[4.1.6]{Tate}. In order to study the $\gamma$-factor, we will split it into an unramified part $\gamma(0,\Ad_{G^{\vee}}\circ\varphi|_{(\mathfrak{g}^{\vee})^{\varphi(I_K)}},\psi)$ and a ramified part $\gamma(0,\Ad_{G^{\vee}}\circ\varphi|_{((\mathfrak{g}^{\vee})^{\varphi(I_K)})^{\perp}},\psi).$

Consider the idempotent $$E:=\int_{I_K}\frac{\varphi(w)}{\vol(I_K)}dw\in\mathfrak{gl}(\mathfrak{g}^{\vee}).$$ We have $E\cdot\mathfrak{g}^{\vee}=(\mathfrak{g}^{\vee})^{\varphi(I_K)},$ and we write $$((\mathfrak{g}^{\vee})^{\varphi(I_K)})^{\perp}:=(1-E)\cdot\mathfrak{g}^{\vee}.$$ We consider the decomposition $$\mathfrak{g}^{\vee}=(\mathfrak{g}^{\vee})^{\varphi(I_K)}\oplus ((\mathfrak{g}^{\vee})^{\varphi(I_K)})^{\perp},$$ and we compare it with 
$$\mathfrak{g}^{\vee}=\Lie(T^{\vee})\oplus\bigoplus_{\alpha^{\vee}\in\Phi(G^{\vee},T^{\vee})}\mathfrak{g}^{\vee}_{\alpha^{\vee}}.$$
Since $\varphi(W_K)\subset T^{\vee},$ $\Lie(T^{\vee})$ is necessarily contained in $(\mathfrak{g}^{\vee})^{\varphi(I_K)}.$ On the other hand, for $\mathfrak{g}^{\vee}_{\alpha^{\vee}}$ to be in $(\mathfrak{g}^{\vee})^{\varphi(I_K)}$ we need 
$$\alpha^{\vee}(g)X=\Ad_g(X)=X\text{ for every }g\in\varphi(I_K),\hspace{0.1cm} X\in\mathfrak{g}^{\vee}_{\alpha^{\vee}},$$
meaning that $\alpha^{\vee}\circ\varphi(x)$ acts trivially for every $x\in I_K.$ But this happens only if $\alpha\circ\chi(\mathcal{O}_K^{\times})=1,$ meaning that $c_{\alpha}=0.$ So, if $c_{\alpha}\neq 0, $ then $\mathfrak{g}^{\vee}_{\alpha^{\vee}}\subset ((\mathfrak{g}^{\vee})^{\varphi(I_K)})^{\perp}.$

Recall that the adjoint $\gamma$-factor is additive, meaning that $$\gamma(0,\Ad_{G^{\vee}}\circ\varphi,\psi)=\gamma(0,\Ad_{G^{\vee}}\circ\varphi|_{((\mathfrak{g}^{\vee})^{\varphi(I_K)})^{\perp}},\psi)\cdot  \gamma(0,\Ad_{G^{\vee}}\circ\varphi|_{(\mathfrak{g}^{\vee})^{\varphi(I_K)}},\psi).$$ That's why we focus now on studying the ramified part $\Ad_{G^{\vee}}\circ\varphi|_{((\mathfrak{g}^{\vee})^{\phi(I_K)})^{\perp}}.$ 
In this case, the $L$-functions involved in the $\gamma$-factor are trivial and $\varepsilon$-factor is not. We get  $$|\gamma(0,\Ad_{G^{\vee}}\circ\varphi_{((\mathfrak{g}^{\vee})^{\varphi(I_K)})^{\perp}},\psi)|=|\varepsilon(0,\Ad_{G^{\vee}}\circ\varphi|_{((\mathfrak{g}^{\vee})^{\varphi(I_K)})^{\perp}},\psi)|.$$ The absolute value of the $\varepsilon$-factor was studied in \cite{ArithmeticInv}. They proved that if $(\tau,W)$ is any Weil-Deligne representation, we have $$\abs{\varepsilon(0,\tau,\psi)}=q^{a(W)/2},$$
where $a(W)\in\mathbb{Z}{\geq 0}$ is the \textbf{Artin conductor} of $W$ and it is defined as follows. The inertial image $D_0=\tau(I_K)$ is the Galois group of a finite extension $K'/K$. If $A'$ is the ring of integers of $K'$ with maximal ideal $P'$, then we define the normal subgroup $D_j\leq D_0$ as the kernel of the action of $D_0$ on $A'/(P')^{j+1}$. This gives the lower indexing ramification filtration $$D_0\geq D_1\geq D_2\geq\cdots\geq D_n=1.$$
We write $d_j=|D_j|$. Now the Artin conductor is defined by $$a(W) = \sum_{j\geq 0}\dim(W/W^{D_j})\frac{d_j}{d_0}.$$ 

In our situation, we have 
$$|\varepsilon(0,\Ad_{G^{\vee}}\circ\varphi|_{((\mathfrak{g}^{\vee})^{\varphi(I_K)})^{\perp}},\psi)|=q^{a(((\mathfrak{g}^{\vee})^{\varphi(I_K)})^{\perp})/2}.$$

Let now $V:=((\mathfrak{g}^{\vee})^{\varphi(I_K)})^{\perp}$ and $V_{\alpha}:=\mathfrak{g}^{\vee}_{\alpha^{\vee}}$ for $\alpha\in \Phi$ with $c_{\alpha}\neq 0.$ We have a $I_K$-stable decomposition $$V=\bigoplus_{\alpha\in\Phi,c_{\alpha}\neq 0}V_{\alpha},$$ and using the additivity of the Artin conductor we write $$a(V)=\sum_{\alpha\in\Phi,c_{\alpha}\neq0} a(V_{\alpha})=\sum_{\alpha\in\Phi,c_{\alpha}\neq 0}\sum_{j\geq 0}\dim(V_{\alpha}/V_{\alpha}^{D^{\alpha}_j})\frac{d^{\alpha}_j}{d^{\alpha}_0}$$ where $D_j^{\alpha}$ is the $j$-th lower indexing ramification subgroup of the image of $I_K$ in $\GL(V_{\alpha})$ and $d_j^{\alpha}$ is the cardinality of $D_j^{\alpha}.$ Notice that $\dim(V_{\alpha}/V_{\alpha}^{D_j^{\alpha}})$ is either 0 or 1. 

Now we just look at the one-dimensional character $\alpha^{\vee}\circ \varphi$ of $W_K,$ which has depth $c_{\alpha}+1.$ We want to use some results from \cite{LocalFields}. Let $K^{\ur}$ be a maximal unramified extension of $K$ and $L/K^{\ur}$ a Galois extension such that the image of $I_K$ in $T^{\vee}$ is isomorphic to $\Gal(L/K^{\ur}).$ Let $\varphi_{L/K^{\ur}}$ denote the Herbrand function of $L/K^{\ur}$ and write $c_{V_{\alpha}}$ for the largest integer such that $\Gal(L/K^{\ur})_{c_{V_{\alpha}}}$ is not trivial. Then, using \cite[Proposition VI.2.5 + Corollary VI.2.1']{LocalFields} we get that $$a(V_{\alpha})=\varphi_{L/K^{\ur}}(c_{V_\alpha})+1.$$ 

Recall that $\Gal(L/K^{\ur})_{c_{V_{\alpha}}}=\Gal(L/K^{\ur})^{\varphi_{L/K^{\ur}}(c_{V_{\alpha}})},$  where $\Gal(L/K^{\ur})^{\varphi_{L/K^{\ur}}(c_{V_{\alpha}})}$ denotes the higher indexing ramification subgroup. We know that the largest integer $i$ such that $\Gal(L/K^{\ur})^i$ is non-trivial is $c_{\alpha}$ (here we are tacitly using that the LLC for tori preserves depth, see \cite[Theorem 7.19]{YuOttawa}). Therefore, we get $$\varphi_{L/K^{\ur}}(c_{V_{\alpha}})=c_{\alpha}.$$ 

We just proved the following lemma:
\begin{lemma}\label{epsilon}
    With notation as above, we have an equality $$|\varepsilon(0,\Ad_{G^{\vee}}\circ\varphi|_{((\mathfrak{g}^{\vee})^{\varphi(I_K)})^{\perp}},\psi)|=q^{(\sum_{\alpha\in\Phi, c_{\alpha}\neq 0} c_{\alpha}+1)/2}=q^{(\sum_{\alpha\in\Phi^+, c_{\alpha}\neq 0} c_{\alpha}+1)}.$$
\end{lemma}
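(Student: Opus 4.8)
The plan is to assemble the pieces that have already been laid out in the text into one clean statement. By the additivity of the Artin conductor applied to the $I_K$-stable decomposition $V = \bigoplus_{\alpha \in \Phi, c_\alpha \neq 0} V_\alpha$, we have $a(V) = \sum_{\alpha \in \Phi, c_\alpha \neq 0} a(V_\alpha)$, so it suffices to compute each $a(V_\alpha)$ and then substitute into $|\varepsilon(0, \Ad_{G^\vee}\circ\varphi|_V, \psi)| = q^{a(V)/2}$. For each such $\alpha$, the line bundle $V_\alpha = \mathfrak{g}^\vee_{\alpha^\vee}$ carries the one-dimensional representation $\alpha^\vee \circ \varphi$ of $W_K$, which factors through $\Gal(L/K^{\ur})$ on inertia. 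First I would invoke \cite[Proposition VI.2.5 + Corollary VI.2.1']{LocalFields} to get $a(V_\alpha) = \varphi_{L/K^{\ur}}(c_{V_\alpha}) + 1$, where $c_{V_\alpha}$ is the largest integer with $\Gal(L/K^{\ur})_{c_{V_\alpha}}$ nontrivial in lower numbering.

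Next I would translate this from lower to upper numbering: by definition of the Herbrand function, $\Gal(L/K^{\ur})_{c_{V_\alpha}} = \Gal(L/K^{\ur})^{\varphi_{L/K^{\ur}}(c_{V_\alpha})}$, and the largest integer $i$ for which the upper-numbering subgroup $\Gal(L/K^{\ur})^i$ is nontrivial equals the depth jump of the character $\alpha^\vee\circ\varphi$, which is $c_\alpha$. Here one uses that the local Langlands correspondence for tori preserves depth (\cite[Theorem 7.19]{YuOttawa}), so the depth of $\alpha^\vee\circ\varphi$ matches the conductor $c_\alpha = \cond(\chi\circ\alpha^\vee|_{\mathcal{O}_K^\times})$ on the automorphic side. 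This yields $\varphi_{L/K^{\ur}}(c_{V_\alpha}) = c_\alpha$ and hence $a(V_\alpha) = c_\alpha + 1$.

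Summing, $a(V) = \sum_{\alpha\in\Phi,\, c_\alpha\neq 0}(c_\alpha + 1)$, and since the $\varepsilon$-factor has absolute value $q^{a(V)/2}$ by the computation of \cite{ArithmeticInv}, we obtain
$$|\varepsilon(0, \Ad_{G^\vee}\circ\varphi|_{((\mathfrak{g}^\vee)^{\varphi(I_K)})^\perp}, \psi)| = q^{(\sum_{\alpha\in\Phi,\, c_\alpha\neq 0}(c_\alpha+1))/2}.$$
Finally, I would pass from the sum over all of $\Phi$ to the sum over $\Phi^+$: since $c_\alpha = c_{-\alpha}$ (the quantity $\cond(\chi\circ\alpha^\vee|_{\mathcal{O}_K^\times})$ is insensitive to the sign of $\alpha$, as $\alpha^\vee$ and $(-\alpha)^\vee$ differ by inversion on $\mathcal{O}_K^\times$), the roots with $c_\alpha\neq 0$ come in pairs $\{\alpha,-\alpha\}$, so $\sum_{\alpha\in\Phi,\,c_\alpha\neq 0}(c_\alpha+1) = 2\sum_{\alpha\in\Phi^+,\,c_\alpha\neq 0}(c_\alpha+1)$, and the exponent becomes $\sum_{\alpha\in\Phi^+,\,c_\alpha\neq 0}(c_\alpha + 1)$, as claimed.

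The main obstacle is the lower-to-upper numbering bookkeeping in the second paragraph: one must be careful that $L/K^{\ur}$ is chosen so that the relevant Galois group is precisely the image of $I_K$ in $T^\vee$ (equivalently, in each $\GL(V_\alpha)$), and that the single character $\alpha^\vee\circ\varphi$ indeed has depth jump exactly $c_\alpha$ rather than merely being bounded by it — this is where the depth-preservation input \cite[Theorem 7.19]{YuOttawa} is essential and should be cited explicitly. The $\varepsilon$-factor absolute value and the additivity of the Artin conductor are standard and require no further argument beyond the references already given.
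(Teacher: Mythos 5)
Your proposal is correct and follows essentially the same route as the paper: additivity of the Artin conductor over the decomposition $V=\bigoplus_{\alpha}V_{\alpha}$, the identity $|\varepsilon(0,\tau,\psi)|=q^{a(W)/2}$ from \cite{ArithmeticInv}, the computation $a(V_{\alpha})=\varphi_{L/K^{\ur}}(c_{V_{\alpha}})+1=c_{\alpha}+1$ via \cite[Proposition VI.2.5 and Corollary VI.2.1']{LocalFields} together with depth preservation for tori \cite[Theorem 7.19]{YuOttawa}. The only addition is your explicit justification of the passage from $\Phi$ to $\Phi^{+}$ using $c_{\alpha}=c_{-\alpha}$, which the paper leaves implicit and which is a welcome clarification.
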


We are now ready to prove our first theorem:

\begin{theorem}\label{Connected}
    Assume that the center of $\mathcal{G}$ is connected. Then $$\fdeg(\pi)=\frac{\dim(\rho_\pi)}{\abs{S^{\sharp}_{\varphi_\pi}}}\abs{\gamma(0,\Ad_{G^{\vee}}\circ\varphi,\psi)},$$ meaning that conjecture \ref{HII} is true. 
\end{theorem}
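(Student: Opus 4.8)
Our strategy is to transport the computation of $\fdeg(\pi)$ across Roche's equivalence of categories to an Iwahori-spherical representation of $H^{\circ}$, apply the HII conjecture for unipotent representations proved in \cite{OnFormDegrUnip}, and then reassemble the answer. Since $Z(\mathcal G)$ is connected we have $C_\chi=1$, hence $H=H^\circ$, and Theorem \ref{Roche} identifies $\mathcal H_t$ with $\mathcal H(H^\circ,\mathcal I_H)$ in a way preserving square-integrability and formal degrees. Thus $\pi$ corresponds to an irreducible discrete series $\pi'$ of $H^\circ$ generated by its $\mathcal I_H$-fixed vectors; as $H^\circ$ is split, such a $\pi'$ is a unipotent representation in the sense of \cite{OnFormDegrUnip}. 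By the displayed identity preceding the Remark, $\fdeg(\pi)=\frac{\vol(\mathcal I_H)}{\vol(J_\chi)}\fdeg(\pi')$, so it suffices to compute $\fdeg(\pi')$ and the volume ratio.

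For $\fdeg(\pi')$ I would invoke the main result of \cite{OnFormDegrUnip}, namely $\fdeg(\pi')=\frac{\dim\rho_{\pi'}}{\abs{S^{\sharp}_{\varphi_{\pi'}}}}\,\abs{\gamma(0,\Ad_{H^\vee}\circ\varphi_{\pi'},\psi)}$, where $(\varphi_{\pi'},\rho_{\pi'})$ is the enhanced $L$-parameter of $\pi'$ for $H^\circ$ and $H^\vee=Z_{G^\vee}(\varphi_\chi(I_K))$ is its complex dual group. The next task is to match this data with that of $\pi$: by the construction of the correspondence in \cite{PrinicipalDisconencted} (Theorem \ref{KLtriple}, together with the lift of a Kazhdan--Lusztig triple for $H^\circ$ to the KLR-parameter underlying $\pi$, via \cite[Lemma 7.1]{PrinicipalDisconencted} and \cite{ProofDeligneLanglandsConjecture}), after normalising so that $\varphi_\pi|_{I_K}=\varphi_\chi|_{I_K}$ the parameter $\varphi_\pi$ factors through $H^\vee\hookrightarrow G^\vee$, has $\varphi_\pi(W_K)\subset T^\vee$, and is there exactly $\varphi_{\pi'}$. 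Since $Z_{G^\vee}(\varphi_\pi)\subseteq Z_{G^\vee}(\varphi_\pi(I_K))=H^\vee$, one gets $Z_{G^\vee}(\varphi_\pi)=Z_{H^\vee}(\varphi_{\pi'})$, so $\rho_\pi$ and $\rho_{\pi'}$ are irreducible representations of one and the same finite group and $\dim\rho_\pi=\dim\rho_{\pi'}$; and because $\pi$ discrete series forces $\varphi_\pi$ to be a discrete parameter we get $Z_{G^\vee}(\varphi_\pi)^\circ=Z(G^\vee)^\circ=Z(H^\vee)^\circ$, which identifies the split central tori entering the two $\sharp$-constructions and yields $\abs{S^{\sharp}_{\varphi_\pi}}=\abs{S^{\sharp}_{\varphi_{\pi'}}}$.

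To compare $\gamma$-factors, write $\mathfrak h^\vee:=\Lie(H^\vee)$; the discussion preceding Lemma \ref{epsilon} identifies $(\mathfrak g^\vee)^{\varphi_\pi(I_K)}=(\mathfrak g^\vee)^{\varphi_\chi(I_K)}$ with $\mathfrak h^\vee$, and the restriction of $\Ad_{G^\vee}$ along $H^\vee\hookrightarrow G^\vee$ to $\mathfrak h^\vee$ is $\Ad_{H^\vee}$, so $\Ad_{G^\vee}\circ\varphi_\pi|_{\mathfrak h^\vee}=\Ad_{H^\vee}\circ\varphi_{\pi'}$. By additivity of the adjoint $\gamma$-factor and $\abs{\varepsilon(0,-,\psi)}=1$ on unramified pieces,
$$\abs{\gamma(0,\Ad_{G^\vee}\circ\varphi_\pi,\psi)}=\abs{\gamma(0,\Ad_{H^\vee}\circ\varphi_{\pi'},\psi)}\cdot\abs{\gamma\bigl(0,\Ad_{G^\vee}\circ\varphi_\pi|_{((\mathfrak g^\vee)^{\varphi_\pi(I_K)})^\perp},\psi\bigr)},$$
and the inertia action on the perpendicular piece depends only on $\varphi_\chi|_{I_K}$, so Lemma \ref{epsilon} makes the last factor equal to $q^{\sum_{\alpha\in\Phi^+,\,c_\alpha\neq 0}(c_\alpha+1)}$. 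On the other side, the Remark gives $\vol(J_\chi)=\vol(\mathcal I)\,q^{-\sum_{\alpha\in\Phi^+}c_\alpha}$, and with the Haar measures fixed as in \cite{HII} and \cite{OnFormDegrUnip} one computes $\vol(\mathcal I)=(q-1)^{\rank}q^{-\rank-\abs{\Phi^+}}$ and $\vol(\mathcal I_H)=(q-1)^{\rank}q^{-\rank-\abs{\Phi^+_\chi}}$, whence $\frac{\vol(\mathcal I_H)}{\vol(J_\chi)}=q^{\abs{\Phi^+}-\abs{\Phi^+_\chi}}\,q^{\sum_{\alpha\in\Phi^+}c_\alpha}=q^{\sum_{\alpha\in\Phi^+,\,c_\alpha\neq 0}(c_\alpha+1)}$, using $\abs{\Phi^+}-\abs{\Phi^+_\chi}=\#\{\alpha\in\Phi^+:c_\alpha\neq 0\}$. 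Multiplying the two identities yields $\fdeg(\pi)=\frac{\dim\rho_\pi}{\abs{S^{\sharp}_{\varphi_\pi}}}\abs{\gamma(0,\Ad_{G^\vee}\circ\varphi_\pi,\psi)}$, which is Conjecture \ref{HII} for $\pi$.

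The step I expect to be the main obstacle is precisely this twofold bookkeeping. First, one must extract carefully from \cite{PrinicipalDisconencted} (and its inputs \cite{reeder2002isogenies}, \cite{ProofDeligneLanglandsConjecture}) that the correspondence sends $\pi$ to the enhanced parameter assembled from the Kazhdan--Lusztig data of $\pi'$, so that $Z_{G^\vee}(\varphi_\pi)=Z_{H^\vee}(\varphi_{\pi'})$, $\rho_\pi=\rho_{\pi'}$, and the $\sharp$-quotients genuinely coincide. Second, one must verify that the normalisations of $\mu_G$ in \cite{HII} and of $\mu_{H^\circ}$ in \cite{OnFormDegrUnip} are mutually compatible, so that $\vol(\mathcal I_H)/\vol(J_\chi)$ reproduces the ramified $\varepsilon$-factor of Lemma \ref{epsilon} exactly, constants included, and not merely up to a power of $q$. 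Once these are in place, the remaining steps are formal.
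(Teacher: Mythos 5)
Your proposal is correct and follows essentially the same route as the paper: transfer via Roche's type to an Iwahori-spherical (unipotent) representation of $H^{\circ}$, apply the formal degree formula of \cite{OnFormDegrUnip}, match enhancements and $S^{\sharp}$-groups using discreteness of $\varphi_\pi$, and balance the ramified $\varepsilon$-factor of Lemma \ref{epsilon} against the volume ratio $\vol(\mathcal{I}_H)/\vol(J_{\chi})$. The only imprecision is that $\varphi_{\pi'}$ is the \emph{unramified} parameter (trivial on $I_K$ and agreeing with $\varphi_\pi$ on $\Frob$ and $\SL_2(\mathbb{C})$) rather than literally $\varphi_\pi$ viewed in $H^{\vee}$, but since the centralizer and the $\gamma$-factor on $(\mathfrak{g}^{\vee})^{\varphi(I_K)}$ only see the Frobenius and $\SL_2(\mathbb{C})$ data there, this does not affect any of your subsequent computations.
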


\begin{proof}
    Recall the formula $$\fdeg(\pi)=\frac{\vol(\mathcal{I}_H)}{\vol(J_{\chi})}\fdeg(\pi'),$$ where $\pi'$ is the $H$-representation associated to $\pi.$ Since the center of $\mathcal{G}$ is connected, we have $H= H^{\circ}.$ Since we chose the Haar measure on $H$ as is in \cite{OnFormDegrUnip} we can use the validity of the HII conjecture for unipotent representations of connected reductive groups \cite[Theorem 5.7]{OnFormDegrUnip}, to get    
    $$\fdeg(\pi')=\frac{\dim(\rho_{\pi'})}{\abs{S_{\varphi_{\pi'}}^{\sharp}}}\abs{\gamma(0,\Ad_{H^{\vee}}\circ\varphi_{\pi'},\psi)}.$$ 

    Since $T$ is split, it can be written $$T\cong K^{\times}\otimes X_*(T)\cong (\mathcal{O}_K^{\times}\times\mathbb{Z})\otimes X_*(T)=T^0\times X_*(T)$$ and the character $\chi$ can be written as $\chi=\chi_{\ur}\chi',$ where $\chi_{\ur}$ is unramified, and $\chi':T^0\to\mathbb{C}^{\times}.$ Let $\Rep_{\mathcal{I}_H}(H)$ and $\Rep_{T^0}(T)$ be the full subcategories of $\Rep(H)$ and $\Rep(T)$ of representations generated by their $\triv_{\mathcal{I}_H}$ and $\triv_{T^{0}}$ isotypical component respectively. Using \cite[Theorem 9.4]{RochePrincipal} we have a commutative diagram 
\begin{equation}\label{RocheDiagram}
    \begin{tikzcd}
	{\Rep(G)^{\mathfrak{s}}} && {\Rep_{\mathcal{I}_H}(H)} \\
	\\
	{\Rep(T)^{\mathfrak{s}}} && {\Rep_{T^0}(T)}
	\arrow["\cong"{description}, from=1-1, to=1-3]
	\arrow[from=3-1, to=3-3]
	\arrow["{\Ind_B^G}"{description}, from=3-1, to=1-1]
	\arrow["{\Ind_{B_{\chi}}^H}"{description}, from=3-3, to=1-3]
    \end{tikzcd}
\end{equation}

where the bottom map sends the character $\chi'\nu$ to $\nu$, where $\nu$ is any unramified character of $T.$ In particular, it sends the character $\chi$ to $\chi_{\ur}$ and the $L$-parameter $\varphi_{\pi'}$ is an unramified $L$-parameter, which is trivial on the inertia $I_K$ and equal to $\varphi_\pi$ outside of it.

Now we need to understand $S^{\sharp}_{\varphi_{\pi'}}=\pi_0(Z_{(H/Z(H)^s)^{\vee}}(\varphi_{\pi'})).$ Notice that since the group $G$ is split, $(G/Z(G)^s)^{\vee}=G^{\vee}_{\der}$ where $G^{\vee}_{\der}$ is the derived subgroup of $G^{\vee}$. Obviously, the same is true for $H.$ We are going to show that $$H^{\vee}_{\der}=H^{\vee}\cap G^{\vee}_{\der}.$$

The discreteness of $\varphi_\pi$ implies that we have an equality $\dim(Z(G^{\vee}))=\dim(Z(H^{\vee})):$ since $Z_{G^{\vee}}\left(\varphi_\pi)=Z_{H^{\vee}}(\varphi_\pi(\Frob),\varphi_\pi(\SL_2(\mathbb{C}))\right),$ the center $Z(H^{\vee})$ is in $Z_{G^{\vee}}(\varphi_\pi).$ We know that both $Z_{G^{\vee}}(\varphi_\pi)/Z(G^{\vee})$ and $Z_{G^{\vee}}(\varphi_\pi)/Z(H^{\vee})$ are finite, meaning that $$\dim(Z(G^{\vee}))=\dim(Z(H^{\vee})).$$ 

Therefore $H^{\vee}/Z(G^{\vee})$ is semisimple and we have equalities of Lie algebras $$\Lie(H^{\vee})=\Lie(H^{\vee}_{\der})\oplus\Lie(Z(G^{\vee})),$$ $$\Lie(G^{\vee})=\Lie(G^{\vee}_{\der})\oplus\Lie(Z(G^{\vee})).$$

But this implies that $$\Lie(H_{\der}^\vee)=\Lie(H^{\vee})\cap \Lie(G^{\vee}_{\der}).$$ Since $H^{\vee}_{\der}$ is obviously contained in $H^{\vee}\cap G^{\vee}_{\der},$ the equality of Lie algebras gives us the desired equality. We can check

\begin{align*}
S^{\sharp}_{\varphi_{\pi'}} 
&= \pi_0\left(Z_{H_{\der}^{\vee}}(\varphi_{\pi'})\right) 
= \pi_0\left(Z_{G_{\der}^{\vee}\cap H^{\vee}}(\varphi_{\pi'})\right)\\
&=\pi_0\left(Z_{Z_{G^{\vee}_{\der}}(\varphi_{\chi}(I_K))}(\varphi_{\pi'}(\Frob),\varphi_{\pi'}(\SL_2(\mathbb{C}))\right)
= \pi_0\left(Z_{G^{\vee}_{\der}}(\varphi_{\pi})\right) = S^{\sharp}_{\varphi_{\pi}}.
\end{align*}

By construction, the Lie algebra of $H^{\vee}$ is $(\mathfrak{g}^{\vee})^{\varphi(I_K)}$, so we have an equality $$\gamma(0,\Ad_{H^{\vee}}\circ\varphi_{\pi'},\psi)=\gamma(\Ad_{G^{\vee}}\circ\varphi_{\pi}|_{(\mathfrak{g}^{\vee})^{\varphi(I_K)}},\psi).$$

The enhancement $\rho_\pi$ is defined by Reeder to be precisely $\rho_{\pi'}$. Therefore $$\dim\rho_{\pi}=\dim\rho_{\pi'}.$$

The only remaining step, is to compare the $\varepsilon$-factor with the ratio $\vol(\mathcal{I}_H)/\vol(J_{\chi}).$ We already know that 

\begin{equation}\label{eq1}
    \vol(J_{\chi})=\frac{\vol(\mathcal{I})}{[\mathcal{I}:J_{\chi}]}=\frac{\vol(\mathcal{I})}{q^{\sum_{\alpha\in\Phi^+,c_{\alpha}\neq 0}c_{\alpha}}}.
\end{equation}
So, we only need to compute $\vol(\mathcal{I}_H)/\vol(\mathcal{I}).$ Let $\overline{\mathcal{I}}$ and $\overline{\mathcal{I}_H}$ be the $k$-points of the maximal reductive quotients of the group obtained by reducing modulo $\mathfrak{p}_k$  the Iwahori subgroups $\mathcal{I}$ and $\mathcal{I}_H$ respectively. We are normalizing the Haar measures of $G$ and $H$ so that we have $$\vol(\mathcal{I})=q^{-(\dim(G)+\dim(\overline{\mathcal{I}})/2}\cdot \abs{\overline{\mathcal{I}}},$$ $$\vol(\mathcal{I}_H)=q^{-(\dim(H)+\overline{\mathcal{I}_H})/2}\cdot \abs{\overline{\mathcal{I}_H}}.$$ Since both $H$ and $G$ contain the same maximal split torus $T,$ we have $$\abs{\overline{\mathcal{I}}}=\abs{\overline{\mathcal{I}_H}},\hspace{0.2cm}\dim(\overline{\mathcal{I}})=\dim(\overline{\mathcal{I}_H}).$$ Therefore we get 
\begin{equation}\label{eq2}
    \frac{\vol(\mathcal{I}_H)}{\vol(\mathcal{I})}=q^{(\dim(G)-\dim(H))/2}=q^{|\{\text{Positive roots with }c_{\alpha}\neq 0\}|}.
\end{equation}
Using \ref{eq1}, \ref{eq2} and Lemma \ref{epsilon} we have the equality

\begin{equation}\label{eq3}
|\varepsilon(0,\Ad\circ\varphi|_{(\mathfrak{g}^{\varphi(I_K)})^{\perp}},\psi)|=\frac{\vol(\mathcal{I}_H)}{\vol(J_{\chi})}.
\end{equation}

Summarizing, we proved  

\begin{align*}
\fdeg(\pi) 
&= \frac{\vol(\mathcal{I}_H)}{\vol(J_{\chi})}
   \cdot \frac{\dim(\rho_{\pi'})}{\abs{S_{\varphi_{\pi'}}^{\sharp}}}
   \cdot \abs{\gamma(0,\Ad\circ\varphi_{\pi'},\psi)} \\
&= \abs{\varepsilon\left(0,\Ad\circ\varphi|_{((\mathfrak{g}^{\vee})^{\varphi(I_K)})^{\perp}},\psi\right)}
   \cdot \frac{\dim(\rho_{\pi})}{\abs{S_{\varphi_\pi}^{\sharp}}}
   \cdot \abs{\gamma\left(0,\Ad\circ\varphi|_{(\mathfrak{g}^{\vee})^{\varphi(I_K)}},\psi\right)} \\
&= \frac{\dim(\rho_{\pi})}{\abs{S_{\varphi_\pi}^{\sharp}}}
   \cdot \abs{\gamma(0,\Ad\circ\varphi|_{(\mathfrak{g}^{\vee})},\psi)},
\end{align*}

where the first equality follows from the discussion at the beginning of page 8 and \cite{OnFormDegrUnip}, the second equality follows from the earlier argument in this proof, and the third equality follows from the additivity of the $\gamma$-factor.  
\end{proof}

We now drop the assumption on the connectedness of the center of $\mathcal{G}$, so that $H\neq H^{\circ}$. In \cite[Page 28-30]{PrinicipalDisconencted} modules of Hecke algebras like $\mathcal{H}(H,\mathcal{I}_H)$ were studied. In particular, they find a canonical way to extend a $\mathcal{H}(H^{\circ},\mathcal{I}_H)$-module $\nu$ to a module over $\mathcal{H}(H^{\circ},\mathcal{I}_H)\rtimes C_{\nu}$ where $C_{\nu}$ is the stabilizer of $\nu$ in $C_{\chi}.$ Using this, we find that the representation $\pi'$ can be obtained by first forming the tensor product of an irreducible representation $\nu_\pi$ of $H^{\circ}$ (extended to $H^\circ\rtimes C_{\nu_\pi}$) with an irreducible representation $\sigma$ of $C_{\nu_\pi}$ (inflated to $H^{\circ}\rtimes C_{\nu_\pi}$), and then inducing this tensor product to $H.$ Notice that since $C_{\chi}$ is abelian, $\sigma$ is 1-dimensional. 

We now relate the formal degree of $\pi'$ with the one of $\nu_\pi.$ 

\begin{lemma}\label{Lemma 1}
    The finite-index induction $\pi':=\ind_{H_{\nu_\pi}}^{H}(\nu_\pi\otimes\sigma)$ has the same formal degree of $\nu_\pi\otimes \sigma.$
\end{lemma}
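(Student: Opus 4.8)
The plan is to compute the formal degree of a finite-index induction directly from matrix coefficients. Let me set up notation: write $H_{\nu_\pi} := H^\circ \rtimes C_{\nu_\pi}$ for the stabilizer, so that $[H : H_{\nu_\pi}] = [C_\chi : C_{\nu_\pi}] =: m$ is finite, and $\pi' = \ind_{H_{\nu_\pi}}^H(\nu_\pi \otimes \sigma)$. The key is that both $H$ and $H_{\nu_\pi}$ share the same split component of the center — indeed $Z(H)^s \subset H^\circ \subset H_{\nu_\pi}$, so we integrate matrix coefficients over $H/Z(H)^s$ and $H_{\nu_\pi}/Z(H)^s$ respectively, and the Haar measure on $H$ restricts to the one on $H_{\nu_\pi}$ (recall we gave $C_\chi$ the counting measure). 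So the formal degree ratio is governed purely by the finite index $m$.

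First I would recall the standard description of the representation space: $\ind_{H_{\nu_\pi}}^H(\nu_\pi \otimes \sigma)$ is realized on functions $f : H \to V_{\nu_\pi \otimes \sigma}$ with $f(hx) = (\nu_\pi\otimes\sigma)(h)f(x)$ for $h \in H_{\nu_\pi}$, supported on finitely many cosets. Picking coset representatives $g_1, \dots, g_m$ for $H_{\nu_\pi}\backslash H$, the space decomposes as $\bigoplus_i V_i$ where $V_i$ consists of functions supported on $H_{\nu_\pi} g_i$; each $V_i \cong V_{\nu_\pi\otimes\sigma}$ as a vector space. An invariant inner product on $\pi'$ is $\langle f_1, f_2\rangle = \sum_{H_{\nu_\pi}\backslash H} \langle f_1(x), f_2(x)\rangle$, which makes the $V_i$ mutually orthogonal and each isometric (up to a fixed scalar) to $V_{\nu_\pi\otimes\sigma}$ with its inner product. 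Next I would take a vector $v$ supported on the identity coset (i.e. $v \in V_1$, say $v = f_{v_0}$ for $v_0 \in V_{\nu_\pi\otimes\sigma}$) and compute the diagonal matrix coefficient $\pi'_v(x) = \langle \pi'(x) v, v\rangle$. A short calculation shows this vanishes unless $x \in H_{\nu_\pi}$, in which case it equals $(\nu_\pi\otimes\sigma)_{v_0}(x)$, the matrix coefficient of $\nu_\pi\otimes\sigma$. Therefore
\begin{equation*}
\int_{H/Z(H)^s} |\pi'_v(x)|^2 \, d\mu_H(x) = \int_{H_{\nu_\pi}/Z(H)^s} |(\nu_\pi\otimes\sigma)_{v_0}(x)|^2 \, d\mu_{H_{\nu_\pi}}(x) = \frac{|v_0|^2}{\fdeg(\nu_\pi\otimes\sigma)},
\end{equation*}
using that $\mu_H$ restricts to $\mu_{H_{\nu_\pi}}$ and $Z(H)^s = Z(H_{\nu_\pi})^s$ (both groups have the same split torus $T$ as a maximal torus). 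Since $|v|^2 = |v_0|^2$ under our chosen inner product, this yields $\fdeg(\pi') = \fdeg(\nu_\pi \otimes \sigma)$.

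The one point requiring care — and the place I expect to be the main obstacle — is the matching of Haar measures and central tori: one must verify that $Z(H)^s$, $Z(H_{\nu_\pi})^s$ and $Z(H^\circ)^s$ all coincide (all equal to the maximal $K$-split central torus of $T$, since $C_\chi$ acts by pinned, hence torus-preserving, automorphisms and any central split torus lands in $H^\circ$), and that the formal degree is genuinely independent of which closed cocompact subgroup $A$ of the center one integrates over, so that the computation above is legitimate. Given the remark already made in the text that discreteness and the formal-degree condition are independent of $A$, and that $\mu_H$ was fixed by giving $C_\chi$ the counting measure so its restriction to $H_{\nu_\pi}$ is again counting measure on $C_{\nu_\pi}$ times $\mu_{H^\circ}$, this reduces to a bookkeeping check rather than a substantive difficulty. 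I would also remark that because $C_\chi$ is abelian and $\sigma$ is $1$-dimensional, $\nu_\pi\otimes\sigma$ and $\nu_\pi$ have equal formal degrees, so the lemma can equivalently be read with $\nu_\pi$ in place of $\nu_\pi\otimes\sigma$.
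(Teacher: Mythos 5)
Your argument is correct and is essentially the paper's own proof: both realize a vector of the induced representation supported on the identity coset of $H_{\nu_\pi}\backslash H$ (an isometric embedding $w\mapsto\tilde w$, with the counting measure on the coset space), observe that its diagonal matrix coefficient is the extension by zero of that of $\nu_\pi\otimes\sigma$, and conclude by restricting the Haar measure from $H/A$ to $H_{\nu_\pi}/A$. Only your closing side remark needs care: replacing $\nu_\pi\otimes\sigma$ by $\nu_\pi$ is harmless only when $\nu_\pi$ is viewed as the extension to $H_{\nu_\pi}$, since as an $H^{\circ}$-representation its formal degree differs by the factor $|C_{\nu_\pi}|$ recorded in Lemma \ref{Lemma 2}.
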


\begin{proof}
    This is a well-known fact. Let $W$ be the space of $\nu_\pi\otimes \sigma$ and let $V$ be the space of the induction. Let $A\subset Z(H)$ a subgroup such that $Z(H)/A$ is compact, let $\mu_{H/A}$ a Haar measure on $H/A$ and let $\mu_{H_{\nu_\pi}/A}$ the restriction of $\mu_{H/A}$ to $H_{\nu_\pi}/A$. 
    
    We define an isometric embedding $W\to V$ as follows. Given a vector $w\in W$, define $\tilde{w}\in V$ by $\tilde{w}(x)=1_{H_{\nu_\pi}}(x)(\nu_\pi\otimes \sigma)(x)w$. The space $H_{\nu_\pi}\backslash H$ is finite and we take the counting measure $\mu_{H_{\nu_\pi}\backslash H}$ on it. With this choice, the map $W\to V$ defined by $w\mapsto \tilde{w}$ is an isometric embedding. This means that the matrix coefficient of $\tilde{w}$ is the extension by zero of the matrix coefficient of $w$ and the $L^2$-norms of $w$ and $\tilde{w}$ coincide. Then, for any nonzero $w\in W$ we have 
    \begin{align*}
&\fdeg\left(\pi', \mu_{H/A}\right)=\frac{|\tilde{w}|^2}{\int_{H/A} |\pi'_{\tilde{w}}|^2 \, d\mu_{H/A}} \\
&\quad= \frac{|w|^2}{\int_{H_{\nu_\pi}/A} |(\nu_\pi \otimes \sigma)_w|^2 \, d\mu_{H_{\nu_\pi}/A}} =\fdeg\left(\nu_\pi \otimes \sigma, \mu_{H_{\nu_\pi}/A}\right).
\end{align*}

\end{proof}

\begin{lemma}\label{Lemma 2}
    We have an equality $$\fdeg(\nu_\pi\otimes\sigma)\abs{C_{\nu_\pi}}=\fdeg(\nu_\pi).$$
\end{lemma}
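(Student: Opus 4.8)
The plan is to analyze what happens to the formal degree when we twist by the one-dimensional character $\sigma$ of $C_{\nu_\pi}$, and then separately account for the contribution of the finite group $C_{\nu_\pi}$. Let me think about the structure here. We have $\nu_\pi$ an irreducible representation of $H^\circ$ (extended to $H^\circ \rtimes C_{\nu_\pi}$), and $\sigma$ a one-dimensional representation of $C_{\nu_\pi}$ inflated to $H^\circ \rtimes C_{\nu_\pi}$.

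First, tensoring with a one-dimensional character does not change matrix coefficients in absolute value: for $w$ in the space of $\nu_\pi$, the matrix coefficient of $\nu_\pi \otimes \sigma$ at $w$ (with the same $w$ in the tensor-product space, which has the same underlying vector space since $\sigma$ is 1-dimensional) satisfies $|(\nu_\pi \otimes \sigma)_w(h)| = |\sigma(h)| \cdot |(\nu_\pi)_w(h)| = |(\nu_\pi)_w(h)|$, because $\sigma$, being a character of a finite group, is unitary. Hence the defining integral $\int_{H_{\nu_\pi}/A} |(\nu_\pi \otimes \sigma)_w|^2\, d\mu$ equals $\int_{H_{\nu_\pi}/A} |(\nu_\pi)_w|^2\, d\mu$, and consequently $\fdeg(\nu_\pi \otimes \sigma, \mu_{H_{\nu_\pi}/A}) = \fdeg(\nu_\pi, \mu_{H_{\nu_\pi}/A})$ (as representations of $H_{\nu_\pi} = H^\circ \rtimes C_{\nu_\pi}$, with the same Haar measure).

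Second, I must relate the formal degree of $\nu_\pi$ as a representation of $H^\circ \rtimes C_{\nu_\pi}$ to its formal degree as a representation of $H^\circ$. Here the measure bookkeeping is the crux: we fixed $\mu_H$ on $H = H^\circ \rtimes C_\chi$ by giving $C_\chi$ the counting measure, so the induced measure on the open subgroup $H_{\nu_\pi} = H^\circ \rtimes C_{\nu_\pi}$ also gives $C_{\nu_\pi}$ the counting measure, while its restriction to $H^\circ$ is $\mu_{H^\circ}$. Writing $A \subset Z(H^\circ)$ with $Z(H^\circ)/A$ compact (one checks $Z(H^\circ)^s = Z(H)^s$ since these split tori agree, so this is compatible with the normalization used earlier), the quotient measure satisfies $\mu_{H_{\nu_\pi}/A} = |C_{\nu_\pi}| \cdot \mu_{H^\circ/A}$ in the sense that integrating a function on $H_{\nu_\pi}/A$ that is pulled back from... no — rather, $H^\circ/A$ sits inside $H_{\nu_\pi}/A$ as a subgroup of index $|C_{\nu_\pi}|$, and for $w$ in the space of $\nu_\pi$, the matrix coefficient $(\nu_\pi)_w$ on $H_{\nu_\pi}$ restricts on each coset of $H^\circ$ to (a translate of) the matrix coefficient of $\nu_\pi|_{H^\circ}$, so $\int_{H_{\nu_\pi}/A}|(\nu_\pi)_w|^2\, d\mu_{H_{\nu_\pi}/A} = |C_{\nu_\pi}| \int_{H^\circ/A}|(\nu_\pi)_w|^2\, d\mu_{H^\circ/A}$; here one uses that $\nu_\pi|_{H^\circ}$ is irreducible — this is guaranteed because $C_{\nu_\pi}$ is, by construction, exactly the stabilizer and $\nu_\pi$ is its canonical extension, so no multiplicities appear. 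Dividing, $\fdeg(\nu_\pi, \mu_{H_{\nu_\pi}/A}) = \frac{1}{|C_{\nu_\pi}|}\fdeg(\nu_\pi, \mu_{H^\circ/A})$, where on the right $\nu_\pi$ means the restriction to $H^\circ$, which is what is used in Theorem \ref{Connected}'s input. Combining with the first step and Lemma \ref{Lemma 1}, and writing $\fdeg(\nu_\pi)$ for the formal degree of the $H^\circ$-representation, gives $\fdeg(\nu_\pi \otimes \sigma)\,|C_{\nu_\pi}| = \fdeg(\nu_\pi)$.

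The main obstacle is the measure-theoretic bookkeeping across the three groups $H$, $H_{\nu_\pi} = H^\circ \rtimes C_{\nu_\pi}$, and $H^\circ$: I need to be careful that the counting-measure normalization on $C_\chi$ restricts correctly, that the central torus $A = Z(H)^s$ can legitimately be taken inside $H^\circ$ with $Z(H^\circ)/A$ compact, and that the index-$|C_{\nu_\pi}|$ computation of the matrix-coefficient integral is valid — which in turn relies on the irreducibility of $\nu_\pi|_{H^\circ}$ coming from the canonical extension construction of \cite{PrinicipalDisconencted}. The twisting-by-$\sigma$ step is entirely routine since $\sigma$ is a unitary character of a finite group.
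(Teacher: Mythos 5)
Your proposal is correct and follows essentially the same route as the paper: the unitarity of the one-dimensional character $\sigma$ removes it from the absolute values of matrix coefficients, and splitting the integral over $H_{\nu_\pi}/A$ into $\abs{C_{\nu_\pi}}$ cosets of $H^\circ/A$, each contributing the same amount by irreducibility of $\nu_\pi|_{H^\circ}$ (Schur orthogonality), produces the factor $\abs{C_{\nu_\pi}}$. The only cosmetic difference is your choice of central subgroup $A\subset Z(H^\circ)$ in place of the paper's $Z(G)$ (with $Z(H)/Z(G)$ compact by Roche), which does not affect the argument.
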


\begin{proof}
    Let $V$ be the vector space of the representation $\nu_{\pi}$, $W=\mathbb{C}$ the space of $\sigma$ and choose any element $w\in \mathbb{C}$ such that $|w|^2=1.$

    The norm of $v\otimes w\in V\otimes W$ is equal to the norm of $v$ for every $v\in V.$  In order to compare the formal degrees, it suffices to compare the integrals of the matrix coefficients. Note that we may assume \( Z(G) \subset T \subset H \), and that \( Z(H) / Z(G) \) is compact \cite[Theorem 10.7]{RochePrincipal}. Again, we can choose an Haar measure $\mu_{H_{\nu_\pi}/Z(G)}$ on $H_{\nu_\pi}/Z(G)$ and consider its restriction $\mu_{H^{\circ}/Z(G)}$ to $H^{\circ}/Z(G).$ We have

\begin{align*}
&\int_{H_{\nu_\pi} / Z(G)}
\left| \left\langle (\nu_\pi \otimes \sigma)(hx)(v \otimes w), v \otimes w \right\rangle \right|^2
\, d\mu_{H_{\nu_\pi} / Z(G)}(hx) \\
&\quad= \int_{{H^{\circ}}\rtimes C_{\nu_\pi} / Z(G)}
\left| \left\langle \nu_\pi(h)v, v \right\rangle \right|^2
\left| \left\langle \sigma(x)w, w \right\rangle \right|^2
\, d\mu_{{H^{\circ}} \rtimes C_{\nu_\pi} / Z(G)}(hx)
\end{align*}

Since $\abs{\sigma(x)}^2=1$ and $\bigl\langle w,w\bigl\rangle^2=1,$ we have 

\begin{align*}
\fdeg(\nu_\pi \otimes \sigma) 
&= \frac{\left|v \otimes w\right|^2}
         {\int_{H_{\nu_\pi}/ Z(G)}
         \left| \left\langle \nu_\pi(h)v, v \right\rangle \right|^2 
         \, d\mu_{H_{\nu_\pi} / Z(G)}(hc)} \\
&= \frac{|v|^2}
         {|C_{\nu_\pi}| \cdot 
         \int_{{H^{\circ}} / Z(G)} 
         \left| \left\langle \nu_\pi(h)v, v \right\rangle \right|^2 
         \, d\mu_{{H^{\circ}} / Z(G)}(h)} \\
&= \frac{\fdeg(\nu_\pi)}{|C_{\nu_\pi}|}\qedhere
\end{align*}

\end{proof}

Now we are ready to prove our last theorem:

\begin{theorem}
    We have an equality $$\fdeg(\pi)=\frac{\dim(\rho_\pi)}{\abs{S^{\sharp}_{\varphi_{\pi}}}}\gamma(0,\Ad_{G^{\vee}}\circ\varphi,\psi).$$ This means that the HII conjecture is true for irreducible discrete series appearing in a principal series.
\end{theorem}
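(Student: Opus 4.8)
The plan is to reduce the disconnected-center case to the connected-center case (Theorem \ref{Connected}) by tracking how each term in the HII formula changes when passing from $H^\circ$ to $H = H^\circ \rtimes C_\chi$. The starting point is again the chain of equalities
$$\fdeg(\pi) = \frac{\vol(\mathcal{I}_H)}{\vol(J_\chi)}\fdeg(\pi'),$$
but now $\pi' = \ind_{H_{\nu_\pi}}^H(\nu_\pi \otimes \sigma)$ is induced from a proper subgroup. Combining Lemma \ref{Lemma 1} and Lemma \ref{Lemma 2} gives
$$\fdeg(\pi') = \fdeg(\nu_\pi \otimes \sigma) = \frac{\fdeg(\nu_\pi)}{\abs{C_{\nu_\pi}}},$$
and $\nu_\pi$ is an irreducible discrete series of the \emph{connected} group $H^\circ$, so I can apply the HII conjecture for unipotent representations \cite[Theorem 5.7]{OnFormDegrUnip} directly to $\nu_\pi$. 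As in the proof of Theorem \ref{Connected}, the volume ratio $\vol(\mathcal{I}_H)/\vol(J_\chi)$ equals $\abs{\varepsilon(0,\Ad_{G^\vee}\circ\varphi|_{((\mathfrak{g}^\vee)^{\varphi(I_K)})^\perp},\psi)}$ by Lemma \ref{epsilon} and equations \eqref{eq1}, \eqref{eq2}, \eqref{eq3} — this computation only involves $\Phi_\chi$ and the split torus $T$, both of which are unchanged by the disconnectedness of the center. Likewise the adjoint $\gamma$-factor of $\varphi_{\nu_\pi}$ for $H^{\circ,\vee}$ matches $\gamma(0,\Ad_{G^\vee}\circ\varphi|_{(\mathfrak{g}^\vee)^{\varphi(I_K)}},\psi)$ since $\Lie(H^{\circ,\vee}) = \Lie(H^\vee) = (\mathfrak{g}^\vee)^{\varphi(I_K)}$. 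So after substituting everything, what remains is to identify
$$\frac{\dim(\rho_{\nu_\pi})}{\abs{S^\sharp_{\varphi_{\nu_\pi}}}}\cdot\frac{1}{\abs{C_{\nu_\pi}}} \quad\text{with}\quad \frac{\dim(\rho_\pi)}{\abs{S^\sharp_{\varphi_\pi}}}.$$

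**The main obstacle** is precisely this last bookkeeping of enhancements and component groups. Here one must use the explicit description of the Langlands correspondence from \cite{PrinicipalDisconencted}: the KL-triple $(t_q, x, \rho)$ for $H^\vee = H^{\circ,\vee}\rtimes C_\chi$ attached to $\pi$ is built from a KL-triple $(t_q, x, \rho')$ for $H^{\circ,\vee}$ attached to $\nu_\pi$ by extending $\rho'$, and the extension data corresponds exactly to the choice of the character $\sigma$ of $C_{\nu_\pi}$. Concretely, $\pi_0(Z_{H^\vee}(\varphi_\pi))$ is an extension of $\pi_0(Z_{H^{\circ,\vee}}(\varphi_{\nu_\pi}))$ by a quotient of $C_{\nu_\pi} = \mathrm{Stab}_{C_\chi}(\nu_\pi)$, and $\rho_\pi = \mathrm{Ind}(\rho_{\nu_\pi}\otimes\sigma)$ in the appropriate sense, so that $\dim\rho_\pi = [C_\chi : C_{\nu_\pi}]\cdot\dim\rho_{\nu_\pi}$ (using $\dim\sigma = 1$). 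Passing to the quotient by $Z(G)^s$ (so replacing $G^\vee, H^\vee$ by their derived subgroups as in Theorem \ref{Connected}, using $H^\vee_{\der} = H^\vee\cap G^\vee_{\der}$), the analogous statement gives $\abs{S^\sharp_{\varphi_\pi}} = [C_\chi : C_{\nu_\pi}]\cdot\abs{C_{\nu_\pi}}\cdot\abs{S^\sharp_{\varphi_{\nu_\pi}}} / (\text{correction})$; the factors of $[C_\chi:C_{\nu_\pi}]$ cancel between numerator and denominator, and the residual $\abs{C_{\nu_\pi}}$ in $\abs{S^\sharp_{\varphi_\pi}}$ is exactly absorbed by the $\abs{C_{\nu_\pi}}^{-1}$ coming from Lemma \ref{Lemma 2}. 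I expect the delicate point to be confirming that $C_{\nu_\pi}$ injects into $S^\sharp_{\varphi_\pi}$ (rather than into $\pi_0(Z_{H^\vee}(\varphi_\pi))$ with some collapse against the center), which is where one genuinely needs $\nu_\pi$ discrete and the abelianness of $C_\chi$ established by Roche.

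**In more detail, the steps in order are:** (1) write $\fdeg(\pi) = \frac{\vol(\mathcal{I}_H)}{\vol(J_\chi)}\fdeg(\pi')$ and apply Lemmas \ref{Lemma 1} and \ref{Lemma 2} to get $\fdeg(\pi) = \frac{\vol(\mathcal{I}_H)}{\vol(J_\chi)}\cdot\frac{\fdeg(\nu_\pi)}{\abs{C_{\nu_\pi}}}$; (2) apply \cite[Theorem 5.7]{OnFormDegrUnip} to the connected group $H^\circ$ to expand $\fdeg(\nu_\pi) = \frac{\dim\rho_{\nu_\pi}}{\abs{S^\sharp_{\varphi_{\nu_\pi}}}}\abs{\gamma(0,\Ad_{H^{\circ,\vee}}\circ\varphi_{\nu_\pi},\psi)}$; (3) reuse verbatim the volume/$\varepsilon$-factor identity \eqref{eq3} and the $\gamma$-factor identification from the proof of Theorem \ref{Connected} — neither uses connectedness of $Z(\mathcal{G})$ — to rewrite the product as $\frac{\dim\rho_{\nu_\pi}}{\abs{C_{\nu_\pi}}\cdot\abs{S^\sharp_{\varphi_{\nu_\pi}}}}\abs{\gamma(0,\Ad_{G^\vee}\circ\varphi,\psi)}$; (4) invoke the construction of the LLC in \cite{PrinicipalDisconencted} (Theorem \ref{KLtriple} and \cite[Lemma 7.1]{PrinicipalDisconencted}) to show $\dim\rho_\pi = [C_\chi:C_{\nu_\pi}]\dim\rho_{\nu_\pi}$ and $\abs{S^\sharp_{\varphi_\pi}} = [C_\chi:C_{\nu_\pi}]\abs{C_{\nu_\pi}}\abs{S^\sharp_{\varphi_{\nu_\pi}}}$, whence $\frac{\dim\rho_{\nu_\pi}}{\abs{C_{\nu_\pi}}\abs{S^\sharp_{\varphi_{\nu_\pi}}}} = \frac{\dim\rho_\pi}{\abs{S^\sharp_{\varphi_\pi}}}$; (5) conclude. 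Step (4) is the crux and the only place where new work beyond Theorem \ref{Connected} is required.
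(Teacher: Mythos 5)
Your proposal follows essentially the same route as the paper: reduce to $\nu_\pi$ via Lemmas \ref{Lemma 1} and \ref{Lemma 2}, apply the unipotent HII theorem of \cite{OnFormDegrUnip} to the connected group $H^{\circ}$, reuse the volume/$\varepsilon$-factor and $\gamma$-factor identifications from Theorem \ref{Connected}, and finish with the same enhancement/component-group bookkeeping, namely $\dim\rho_\pi=[C_\chi:C_{\nu_\pi}]\dim\rho_{\nu_\pi}$ and $\abs{S^\sharp_{\varphi_\pi}}=\abs{C_\chi}\cdot\abs{S^\sharp_{\varphi_{\pi'}}}$ (your step (4) is this identity since $[C_\chi:C_{\nu_\pi}]\abs{C_{\nu_\pi}}=\abs{C_\chi}$), the latter coming from $Z_{H^\vee}(\varphi_{\pi'})=Z_{H^{\circ\vee}}(\varphi_{\pi'})\rtimes C_\chi$ as in the paper. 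The argument is correct and matches the paper's proof in structure and substance.
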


\begin{proof}
We define $\pi',\nu_\pi$ and $\sigma$ as above. The $H^{\circ}$ representation $\nu_\pi$ is a unipotent, and using Lemma \ref{Lemma 1}, Lemma \ref{Lemma 2} and \cite{OnFormDegrUnip} we obtain 
$$\fdeg(\pi')=\frac{\dim(\rho_{\nu_\pi})}{\abs{C_{\nu_\pi}}\abs{S_{\varphi_{\nu_\pi}}^{\sharp}}}\abs{\gamma(0,\Ad_{H^{\vee}}\circ\varphi_{\nu_\pi},\psi)},$$
and again, thanks to diagram \ref{RocheDiagram}, we have $\varphi_{\nu_\pi}=\varphi_{\pi'}.$ In the proof of Theorem \ref{Connected}, we have already shown the equalities $$\frac{\vol(\mathcal{I}_H)}{\vol(J_{\chi})}=\varepsilon(0,\Ad_{G^{\vee}}\circ\varphi,\psi),$$
$$\gamma(0,\Ad_{G^{\vee}}\circ\varphi|_{(\mathfrak{g}^{\vee})^{\varphi(I_K)}},\psi)=\gamma(0,\Ad_{G^{\vee}}\circ\varphi_{\pi'},\psi).$$
Moreover, we have seen that  
$$Z_{{(H^{\circ}}/Z(H^{\circ})^s)^{\vee}}(\varphi_{\pi'}) = Z_{(G/Z(G)^s)^{\vee}}(\varphi_\pi).$$
The only thing that is left to prove is that 
$$\frac{\dim(\rho_{\pi})}{\abs{S^{\sharp}_{\varphi_{\pi}}}}=\frac{\dim(\rho_{\nu_\pi})}{\abs{S^{\sharp}_{\varphi_{\pi'}}}\cdot\abs{C_{\nu_{\pi}}}}.$$

The full centralizer $Z_{{H}^{\vee}}(\varphi_{\pi'})=Z_{G^{\vee}}(\varphi_\pi)$ is given by the semidirect product of its identity component $Z_{{H^{\circ}}^{\vee}}(\varphi_{\pi'})\subset Z_{G^{\vee}}(\varphi_{\pi})$ and the subgroup of $C_{\chi}$ which consists of the elements that preserve the $G^{\vee}$-conjugacy class of $\chi,$ but this is exactly $C_{\chi}$.

Therefore we get $$\abs{S^{\sharp}_{\varphi_{\pi'}}}\cdot\abs{C_{\chi}}=\abs{S^{\sharp}_{\varphi_{\pi}}}.$$
 Now we look at the enhancements. In particular, we look more carefully at the results in \S 3 from  \cite{PrinicipalDisconencted}. Their construction is compatible with normalized parabolic induction. This means that if $L$ is an $\mathcal{H}({H^{\circ}},\mathcal{I}_H)$-module, and $(\phi_L,\rho_L)$ is the associated KLR-parameter, then the induction $L':=\ind_{\mathcal{H}(H^{\circ},\mathcal{I}_H)\rtimes C_{\rho_L}}^{\mathcal{H}(H,\mathcal{I}_H)}L\otimes\sigma$ corresponds to the KLR-parameter $(\phi_L,\rho_{L'})$ with $\rho_{L'}=\ind_{\pi_0(Z_{{H^{\circ}}^{\vee}}(\varphi_{\pi'}))\rtimes C_{\rho_L}}^{\pi_0(Z_{G^{\vee}}(\varphi_\pi))}(\rho_L\otimes\sigma).$

In our case, this means that 
$$\rho_{\pi'}|_{\pi_0(Z_{{H^{\circ}}^{\vee}}(\varphi_{\pi'}))\rtimes C_{\nu_\pi}}=\bigoplus_{c\in C_{\nu_\pi}\backslash C_{\chi}}{}^c\rho_{\nu_\pi} $$ 
where $C_{\nu_\pi}$ is seen as the stabilizer in $C_{\chi}$ of $\varphi_{\pi'}.$ This means that $$\dim(\rho_\pi)=[C_{\chi}:C_{\nu_\pi}]\cdot \dim(\rho_{\nu_\pi}).$$

Therefore we have 
\begin{align*}
\fdeg(\pi) 
&= \frac{\vol(\mathcal{I}_H)}{\vol(J_{\chi})} \fdeg(\pi')
= \frac{\vol(\mathcal{I}_H)}{\vol(J_{\chi})} \cdot \frac{\fdeg(\nu_\pi)}{|C_{\nu_\pi}|} 
= \frac{\dim(\rho_{\nu_\pi})}{|C_{\nu_\pi}| \cdot |S_{\varphi_{\pi'}}^{\sharp}|} \cdot \left| \gamma(0, \Ad_{G^{\vee}} \circ \varphi, \psi) \right| \\
&= \frac{\dim(\rho_{\pi})}{|C_{\nu_\pi}| \cdot |S_{\varphi_{\pi'}}^{\sharp}| \cdot [C_{\chi} : C_{\nu_\pi}]} \cdot \left| \gamma(0, \Ad_{G^{\vee}} \circ \varphi, \psi) \right|\\ 
&= \frac{\dim(\rho_{\pi})}{|S_{\varphi_{\pi}}^{\sharp}|} \cdot \left| \gamma(0, \Ad_{G^{\vee}} \circ \varphi, \psi) \right|,
\end{align*}

as desired.
\end{proof}

\bibliographystyle{alpha}
\nocite{*}
\bibliography{Bibliografia}

\begin{thebibliography}{ABPS17}

\bibitem[ABPS17]{PrinicipalDisconencted}
Anne-Marie Aubert, Paul Baum, Roger Plymen, and Maarten Solleveld.
\newblock The principal series of {$p$}-adic groups with disconnected center.
\newblock {\em Proc. Lond. Math. Soc. (3)}, 114(5):798--854, 2017.

\bibitem[BHK11]{TypesPlancherel}
C.~J. Bushnell, G.~Henniart, and P.~C. Kutzko.
\newblock Types and explicit plancherel formulae for reductive \(p\)-adic groups.
\newblock {\em On certain \(L\)-functions}, 13:55--80, 2011.

\bibitem[Bor76]{borel1976admissible}
Armand Borel.
\newblock Admissible representations of a semi-simple group over a local field with vectors fixed under an iwahori subgroup.
\newblock {\em Inventiones mathematicae}, 35(1):233--259, 1976.

\bibitem[FOS21]{OnFormDegrUnip}
Yongqi Feng, Eric Opdam, and Maarten Solleveld.
\newblock On formal degrees of unipotent representations.
\newblock {\em Journal of the Institute of Mathematics of Jussieu}, 21(6):1947–1999, 2021.

\bibitem[GR10]{ArithmeticInv}
Benedict~H. Gross and Mark Reeder.
\newblock Arithmetic invariants of discrete {L}anglands parameters.
\newblock {\em Duke Math. J.}, 154(3):431--508, 2010.

\bibitem[HII08]{HII}
Kaoru Hiraga, Atsushi Ichino, and Tamotsu Ikeda.
\newblock Correction to: ``{F}ormal degrees and adjoint {$\gamma$}-factors'' [{J}. {A}mer. {M}ath. {S}oc. {\bf 21} (2008), no. 1, 283--304; mr2350057].
\newblock {\em J. Amer. Math. Soc.}, 21(4):1211--1213, 2008.

\bibitem[Hum95]{humphreysSemisim}
James~E. Humphreys.
\newblock {\em Conjugacy classes in semisimple algebraic groups}, volume~43 of {\em Mathematical Surveys and Monographs}.
\newblock American Mathematical Society, Providence, RI, 1995.

\bibitem[Kal22]{kaletha2022locallanglandsconjecturesdisconnected}
Tasho Kaletha.
\newblock On the local langlands conjectures for disconnected groups.
\newblock \url{https://arxiv.org/abs/2210.02519}, 2022.
\newblock arXiv:2210.02519 [math.RT].

\bibitem[KL87]{ProofDeligneLanglandsConjecture}
David Kazhdan and George Lusztig.
\newblock Proof of the {D}eligne-{L}anglands conjecture for {H}ecke algebras.
\newblock {\em Invent. Math.}, 87(1):153--215, 1987.

\bibitem[Opd16]{OpdamSpectralCorresp}
Eric Opdam.
\newblock Spectral correspondences for affine {H}ecke algebras.
\newblock {\em Adv. Math.}, 286:912--957, 2016.

\bibitem[Ree02]{reeder2002isogenies}
Mark Reeder.
\newblock Isogenies of {H}ecke algebras and a {L}anglands correspondence for ramified principal series representations.
\newblock {\em Represent. Theory}, 6:101--126, 2002.

\bibitem[Roc98]{RochePrincipal}
Alan Roche.
\newblock Types and {H}ecke algebras for principal series representations of split reductive {$p$}-adic groups.
\newblock {\em Ann. Sci. \'Ecole Norm. Sup. (4)}, 31(3):361--413, 1998.

\bibitem[Ser79]{LocalFields}
Jean-Pierre Serre.
\newblock {\em Local fields}, volume~67 of {\em Graduate Texts in Mathematics}.
\newblock Springer-Verlag, New York-Berlin, 1979.
\newblock Translated from the French by Marvin Jay Greenberg.

\bibitem[Tat79]{Tate}
J.~Tate.
\newblock Number theoretic background.
\newblock In {\em Automorphic forms, representations and {$L$}-functions ({P}roc. {S}ympos. {P}ure {M}ath., {O}regon {S}tate {U}niv., {C}orvallis, {O}re., 1977), {P}art 2}, volume XXXIII of {\em Proc. Sympos. Pure Math.}, pages 3--26. Amer. Math. Soc., Providence, RI, 1979.

\bibitem[Yu09]{YuOttawa}
Jiu-Kang Yu.
\newblock On the local {L}anglands correspondence for tori.
\newblock In {\em Ottawa lectures on admissible representations of reductive {$p$}-adic groups}, volume~26 of {\em Fields Inst. Monogr.}, pages 177--183. Amer. Math. Soc., Providence, RI, 2009.

\end{thebibliography}

\end{document}